\title{On the multi-robber damage number\thanks{This work has been supported by COST Action CA22145 GameTable.}}
\author{Miloš Stojaković\footnote{Department of Mathematics and Informatics, Faculty of Sciences, University of Novi Sad, Serbia. Partly supported by the Science Fund of the Republic of Serbia, Grant \#7462: Graphs in Space and Time: Graph Embeddings for Machine Learning in Complex Dynamical Systems (TIGRA), and partly supported by the Ministry of Science, Technological Development and Innovation of the Republic of Serbia (grants 451-03-33/2026-03/200125 \& 451-03-34/2026-03/200125). {\tt milos.stojakovic@dmi.uns.ac.rs}} \and Lasse Wulf\footnote{Department of Mathematics and Computer Science, University of Southern Denmark, Vejle, Denmark. Supported by the Austrian Science Fund (FWF):W1230 and by the Carlsberg Foundation CF21-0302 ``Graph Algorithms with Geometric Applications''.
 {\tt lwulf@imada.sdu.dk}}}
\date{}
\newtheorem{theorem}{Theorem}[section]
\newtheorem{definition}[theorem]{Definition} 
\newtheorem{lemma}[theorem]{Lemma}
\newtheorem{proposition}[theorem]{Proposition}
\newtheorem{observation}[theorem]{Observation}	
\newtheorem{conjecture}[theorem]{Conjecture}
\newcommand{\set}[1]{\{#1\}}
\newcommand{\N}{\mathbb{N}}
\DeclareMathOperator{\dmg}{dmg}
\DeclareMathOperator{\guard}{\textsc{guard}}
\begin{document}
\emergencystretch 3em

\maketitle

\begin{abstract}
We study a variant of the Cops and Robbers game on graphs in which the robbers damage the visited vertices, aiming to maximize the number of damaged vertices. For that game with one cop against $s$ robbers a conjecture was made by Carlson, Halloran and Reinhart that the cop can save three vertices from being damaged as soon as the maximum degree of the base graph is at least $\binom{s}{2} + 2$.

We are able to verify the conjecture and prove that it is tight once we add the assumption that the base graph is triangle free.
We also study the game without that assumption, disproving the conjecture in full generality and further attempting to locate the smallest maximum degree of a base graph which guarantees that the cop can save three vertices against $s$ robbers. We  show that this number is between $2\binom{s}{2} - 3$ and  $2\binom{s}{2} + 1$.

Furthermore, after  the game has been previously studied with one cop and multiple robbers, as well as with one robber and multiple cops, we initiate the study of the game with two cops and two robbers. In the case when the base graph is a cycle we determine the exact number of damaged vertices. Additionally, when the base graph is a path we provide bounds that differ by an additive constant.
\end{abstract}


\section{Introduction}
The \emph{Cops and Robbers game} is a famous graph-based pursuit-evasion game with perfect information, where multiple cops try to catch a single robber. It was introduced independently by Quilliot~\cite{quilliot1978jeux} and by Nowakowski and Winkler~\cite{nowakowski1983vertex}. A book by Bonato and Nowakovski~\cite{bonato2011game} covers this general subject in great detail. 

The Cops and Robbers game is famous for the long standing Meyniel's conjecture, see~\cite{bonato2012Meyniel}, which states that $\Theta(\sqrt{n})$ cops always suffice to catch a robber on any graph. The best known bounds today on the number of required cops is only known to lie between $\Omega(\sqrt{n})$ \cite{pralat2010does} and $O(n/2^{(1+o(1))\sqrt{\log n}})$ \cite{frieze2012variations, lu2012meyniel, scott2011bound}. 

Over the years, the game has seen many variants and suggested modifications. In several of these modifications, including the original game, the cops' goal is merely to catch the robber in finitely many moves. In such a setting it clearly makes no sense to add more robbers into the picture.

Cox and Sanai~\cite{cox2019damage} introduced a variant of the game that gives the robber a more active role than simply evading the cops: the robber tries to damage as many vertices as possible, by visiting them before getting captured, and the cops attempt to minimize this damage. While the damage variant was originally studied with one cop and one robber, it was later extended to multiple cops by Carlson, Halloran and Reinhart \cite{carlson2021damage}. Unlike the original game, in the damage variant it makes sense to introduce multiple robbers, which was done recently by Carlson, Eagleton, Geneson, Petrucci, Reinhart, and Sen \cite{carlson2022multi}. Finally, Huggan, Messinger and Porter \cite{huggan2024damage,cor-huggan2024damage} consider the damage number of a graph as a parameter, and analyzed how this parameter behaves with respect to the graph product.


For a formal definition of the problem, let $G$ be a graph and $s$ a positive integer. In round 0 the cop first chooses an initial vertex to occupy, and then each of the $s$ robbers chooses an initial vertex. Each subsequent round consists of a turn for the cop followed by a turn for the robbers, where each individual has the opportunity to (but is not required to) move to a neighboring vertex on their turn. If the cop ever occupies the same vertex
as a robber, we say that that robber is captured, and it is subsequently removed from the game. All information about the locations and movements of everyone is publicly available, i.e., this is a perfect information game.

Now it remains to add the concept of \emph{damage} to the setup. If a vertex $v$ is occupied by a robber at the end of a round and the robber is not caught in the \emph{following} round, then $v$ becomes damaged and stays damaged for the rest of the game. In this version of the game the cop is trying to minimize the number of damaged vertices, while the robbers play to damage as many vertices as possible. The \emph{$s$-robber damage number} of G, denoted $\text{dmg}(G;s)$, is the number of damaged vertices when both the cop and the $s$ robbers play optimally on $G$.

We note that this is not the only version of Cops and Robbers where the robbers' movement on the base graph is tracked. Kinnersley and Peterson~\cite{kinnersley2018cops} introduced the ``burning bridges'' variant, where the robbers actually change the base graph as the game is played  (unlike in our case), by removing edges that they traverse. See~\cite{herrman2022capture} for further progress. Clearly, adding more robbers in this variant could change the outcome, just as it might in our setting.

\paragraph{Our results.} In \cref{sec:protect-three-vertices}, we consider the setting with one cop and multiple robbers. We say that the cop \emph{saves} $k$ vertices, if $\dmg(G;s) \leq n - k$. In other words, assuming optimal play from both the cop and the robbers, we have that $k$ out of the $n$ vertices do not get damaged.
We can now ask how many vertices can the cop save? 

As it was observed by Carlson, Halloran and Reinhart~\cite{carlson2022multi}, it is always possible for a single cop to save two vertices (assuming the graph has at least one edge): 
if the cop simply patrols a fixed edge of the graph, none of its endpoints can ever get damaged. Therefore we trivially have that two vertices can always get saved. 
This begs the question: When is it possible to save three vertices? 

This question was studied thoroughly in~\cite{carlson2022multi} for the case of $s=2$ robbers. They showed that on any graph $G$ with $\Delta(G) \geq 3$ (that is, there is at least one vertex of degree at least 3) the cop can indeed save three vertices. They used this insight to give a full characterization of all the graphs where the cop can save three vertices against 2 robbers. 

For the case of more than two robbers, they proposed the following conjecture.

\begin{conjecture}[\cite{carlson2022multi}]
For all $s > 2$, if $G$ is a graph with $\Delta(G) \geq \binom{s}{2} + 2$, then the cop can save three vertices against $s$ robbers.
\end{conjecture}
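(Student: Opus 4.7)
The plan is to pick a vertex $v$ of maximum degree, so $\deg(v) = \Delta(G) \geq \binom{s}{2}+2$, and have the cop patrol an edge $uv$ for a suitably chosen neighbor $u$ of $v$. This already keeps $u$ and $v$ safe throughout the game, so the task reduces to protecting a single additional vertex. I would look among the at least $\binom{s}{2}+1$ remaining neighbors of $v$, and try to argue that the cop can keep at least one of them undamaged by making occasional short detours from the $uv$-patrol.

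The key combinatorial idea I would try to push through is that the $s$ robbers can together create at most $\binom{s}{2}$ ``threats'' among the neighbors of $v$. A lone robber sitting at a neighbor $w \ne u$ is caught immediately by the cop stepping from $v$ to $w$, so damaging such a $w$ should require the coordinated action of at least two robbers (one acting as a distraction, the other striking). Associating each successful attack with the unordered pair of participating robbers would bound the number of simultaneously threatened neighbors by $\binom{s}{2}$, and since there are $\binom{s}{2}+1$ candidates, a pigeonhole argument would produce a neighbor $w^\ast$ of $v$ that is never under threat. To formalize this, I would define for each robber $r_i$ a ``shadow'' $S_i \subseteq N(v) \setminus \{u\}$ of neighbors it can reach in few moves, then show that a damage event on $N(v) \setminus \{u\}$ forces two such shadows to overlap in a controlled way; the cop's strategy would then interleave the $uv$-patrol with precomputed interceptions whenever the guards around $w^\ast$ are broken.

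The main obstacle I foresee is precisely that this pair-counting is clean only when no two neighbors of $v$ share a second common neighbor. If $w_i, w_j \in N(v)$ lie on a triangle with $v$ through some vertex $x$, then a single robber at $x$ threatens both $w_i$ and $w_j$ and can switch targets depending on the cop's response, so one robber alone can generate more than one threat and the $\binom{s}{2}$ bound collapses. This matches the paper's abstract, which states that the conjecture holds under an additional triangle-free assumption but fails in general. I therefore expect the plan above to go through unchanged under triangle-freeness, whereas in full generality the conjecture should be refuted by an explicit construction exploiting triangles through the vertex of maximum degree.
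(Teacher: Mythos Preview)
Your final-paragraph diagnosis is right and matches the paper: the conjecture fails in general, holds under a triangle-free hypothesis, and the obstruction is exactly that triangles through $v$ let one robber threaten two neighbours at once. Where your proposal actually breaks down is the positive (triangle-free) direction. Patrolling an edge $uv$ does not support the pair-count you want: whenever the cop is at $u$, a lone robber can step onto some $w\in N(v)\setminus\{u\}$, and since $uw\notin E(G)$ by triangle-freeness the cop cannot reach $w$ on its next move, so $w$ is damaged with no second ``distracting'' robber involved at all. Timed against the patrol, a single robber can damage many neighbours of $v$ this way, so the $\binom{s}{2}$ bound evaporates before triangles even enter the picture. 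The looser ``occasional short detours'' around a pre-selected $w^\ast$ does not repair this either, because which neighbours end up damaged depends on the robbers' adaptive play and cannot be fixed by the cop in advance.

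The paper's strategy is different and cleaner: the cop sits at $v$ and, whenever any robber enters $N(v)$, steps out to capture it and immediately returns to $v$. This keeps $v$ safe (there are never two consecutive rounds with $v$ unoccupied) and, crucially, forces every robber that enters $N(v)$ to be caught. The counting is then per capture rather than per pair: during the two-round excursion for the $j$-th capture at most $s-j$ robbers remain, and since $N(v)$ is an independent set each of them can damage at most one neighbour of $v$ in those two rounds, giving $\sum_{j=1}^{s-1}(s-j)=\binom{s}{2}$ damaged neighbours in total and hence at least two undamaged ones in addition to $v$. Your $\binom{s}{2}$ is morally the same sum---one can read $s-j$ as the number of pairs containing the just-caught robber---but it is the capture mechanism, not an edge patrol, that makes this bookkeeping legitimate.
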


It turns out that this conjecture is ``almost true'', as we are able to prove it with an additional condition, that the graph $G$ is triangle-free. In the general case, when $G$ is not necessarily triangle-free, we have found counterexamples that falsify the conjecture. It becomes true and (asymptotically) tight again, if the right side of the corresponding bound $\Delta(G) \geq \binom{s}{2} + 2$ is multiplied by a factor of 2.

Before we state our two main results, we introduce a notation for the two extremal values that we are after. 

\begin{definition}
For $s \geq 1$, let $\Delta_s$ denote the smallest integer such that for all graphs $G$ with $\Delta(G) \geq \Delta_s$, the cop can save three vertices against $s$ robbers. 

Further, let $\Delta'_s$ denote the smallest integer such that for all triangle-free graphs $G$ with $\Delta(G) \geq \Delta'_s$, the cop can save three vertices against $s$ robbers.
\end{definition}

In the case of triangle-free graphs we can determine the exact value of the critical number of robbers.
\begin{theorem} \label{t:main-prime}
For $s\geq 2$, we have $$\Delta'_s = \binom{s}{2} + 2.$$
\end{theorem}
For general graphs we get the following bounds for the critical number of robbers.
\begin{theorem}\label{t:main}
For $s\geq 2$, we have 
$$ 2\binom{s}{2} - 3 \leq \Delta_s \leq 2\binom{s}{2} + 1. $$
\end{theorem}

For the second part of this paper, we recall that Carlson, Halloran and Reinhart~\cite{carlson2021damage} considered the setting with a single cop and multiple robbers, while Carlson, Eagleton, Geneson, Petrucci, Reinhart, and Sen~\cite{carlson2022multi} considered the setting with a single robber and multiple cops.

In Section~\ref{sec:two-cops-two-robbers} we naturally broaden the study to the next feasible case, with two cops and two robbers.
This setting is considered on the two families of underlying graphs, cycles and paths. We define $\dmg(G;s,t)$ to be the damage number of the graph $G$ when $s$ robbers are facing $t$ cops. We let $C_n$ be the cycle on $n$ edges and $P_{n+1}$ be the path on $n+1$ vertices and $n$ edges, and we prove the following.

\begin{theorem}
\label{thm:damage-cycle}
    When two cops are playing against two robbers on a cycle, we have
\[ \dmg(C_n;2,2) = \left\lfloor \frac{2}{3}n - \frac{5}{2} \right\rfloor.  \]
\end{theorem}

This means that as $n$ approaches infinity the additive constant $5/2$ becomes negligible, and an optimal play asymptotically results in a fraction of $2/3$ of the cycle damaged. 

To get the exact value we performed a meticulous analysis supported by extensive technical calculations. Having this in mind, along with the fact that strategies on a cycle and on a path share similarities, we decided not to overload the paper by determining the value of $\dmg(P_{n+1}; 2,2)$ that precisely.  
For the path we obtain bounds that are an additive constant away from each other, showing that, asymptotically, just like in the case of the cycle, a fraction of $2/3$ of the path will be damaged by the robbers, assuming optimal play from both sides.

\begin{theorem}
\label{thm:damage-path}
    When two cops are playing against two robbers on a path, we have
    \[
    \dmg(P_{n+1}; 2, 2) = \frac{2n}{3} - \Theta(1).
    \]
\end{theorem}

Finally, let us note that, after the first version of this paper appeared, Gagarova~\cite{gagarova2025exploring} also investigated the multi-robber damage number. For $s$ robbers against one cop, all graphs on $n\geq 2s+2$ vertices with the smallest possible damage number are characterized. Furthermore, it was established that the sole cop can save $k>3$ vertices from damage against $s$ robbers, provided that $\Delta(G)\geq \binom{s}{2} + k - 1$, extending the upper bound we obtained for Theorem~\ref{t:main-prime}. The investigation in~\cite{gagarova2025exploring} is further broadened by deriving a number of bounds for special families of graphs.

\section{Can one cop protect three vertices?}
\label{sec:protect-three-vertices}
\subsection{Upper bounds}

\begin{lemma} \label{lemma:induced-star}
If the graph $G$ contains an induced star $K_{1,t}$ with $t \geq \binom{s}{2} + 2$, then the cop can save 3 vertices against $s$ robbers.
\end{lemma}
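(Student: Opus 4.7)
The plan is to exhibit an explicit strategy for the cop and then show that (i) the center $c$ of the induced star is never damaged, and (ii) at most $\binom{s}{2}$ of the $t$ leaves are ever damaged; together these give at least $3$ safe vertices, since the star has $t + 1 \geq \binom{s}{2} + 3$ vertices in total.

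Let $c$ and $\ell_1, \ldots, \ell_t$ denote the center and leaves of the induced $K_{1,t}$. The cop starts at $c$ and obeys the following oscillation rule. When the cop is at $c$: if some robber occupies an \emph{undamaged} leaf $\ell_i$, the cop moves to $\ell_i$ and captures every robber sitting there; otherwise the cop stays at $c$. When the cop is at a leaf $\ell_i$ (having just made such a capture), the cop moves back to $c$, automatically catching any robber that has settled on $c$ in the meantime. Part (i) is then immediate: the cop is always at $c$ or at a neighbor of $c$; a robber can occupy $c$ only at the end of a round in which the cop is at a leaf, and by the rule the cop then captures it on the very next move.

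For (ii) I would partition the relevant rounds into cycles, each cycle consisting of an ``A-round'' (cop moves $c \to \ell_{j_1}$) followed by the immediately subsequent ``B-round'' (cop returns $\ell_{j_1} \to c$); rounds in which the cop merely stays at $c$ produce no damage and are skipped. Write $r$ for the number of live robbers at the start of the cycle, $L_A$ for how many of them sit on leaves at that moment, and $U_A \leq L_A$ for the number of distinct undamaged leaves they occupy. The A-round creates exactly $U_A - 1$ new leaf-damages (the cop saves $\ell_{j_1}$; the other $U_A - 1$ undamaged leaves with robbers are lost). The crux, which I expect to be the main obstacle, is to bound the B-round damage: the cop at $\ell_{j_1}$ can only step to $c$ or to a vertex outside the star, so it cannot save any leaf other than $\ell_{j_1}$; however, any robber occupying a still-undamaged leaf at the end of the A-round must have arrived during the robber turn of that A-round, and since leaves are pairwise non-adjacent in the induced star (a robber on a leaf cannot hop to another leaf in one step), such a robber must have started the A-round at a vertex \emph{outside} the star. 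There are only $r - L_A$ such outside-robbers available, so at most $r - L_A$ fresh leaves are damaged in the B-round, and the total cycle damage is at most $(U_A - 1) + (r - L_A) \leq r - 1$.

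Finally, each cycle captures at least one robber in its A-round, so there are at most $s$ cycles and the values of $r$ at their beginnings form a decreasing sequence bounded above by $s, s-1, \ldots, 1$. Summing gives total leaf-damage at most $\sum_{k=1}^{s}(s - k) = \binom{s}{2}$, which together with (i) leaves at least $(t+1) - \binom{s}{2} \geq 3$ undamaged vertices of the star, as required.
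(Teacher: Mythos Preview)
Your argument is correct and follows essentially the same approach as the paper: the cop plays the $\guard(c)$ strategy (oscillating between the center and leaves), the center is protected because the cop is never away from it for two consecutive rounds, and the leaf damage is bounded by $\binom{s}{2}$ via the observation that leaves are pairwise non-adjacent so each surviving robber can damage at most one leaf per excursion. Your A/B-round accounting (yielding at most $r-1$ new damaged leaves per cycle) is a more explicit version of the paper's statement that in the two rounds of the $j$-th excursion the $s-j$ uncaught robbers each damage at most one leaf of $W$; the summation to $\binom{s}{2}$ is identical.
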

\begin{proof}
Assume $G$ contains an induced star $K_{1,t}$. 
We have to give a strategy for the cop which saves three vertices. 
Let $v$ be the central vertex of the star and let $W = \set{w_1,\dots,w_t}$ be the set of its leaves. 
Recall that every round consists of a cop move followed by the robbers moves. The strategy of the cop is to guard the central vertex $v$. Note that any robber that never enters any of the vertices of $W$ can obviously never damage any of them.

Let the $i_1$-th round be the first round where a robber enters $N(v)$. 
The cop immediately moves to catch this robber in round $i_1 + 1$ and afterwards moves back to $v$ in round $i_1 + 2$.
 Now, the cop repeats the same strategy: We let $i_2$ be the smallest integer satisfying $i_2 \geq i_1 + 2$ such that a robber enters $N(v)$ in round $i_2$.
  The cop catches this robber and afterwards moves back to vertex $v$, and so on.
  
  We claim that this strategy guarantees that vertex $v$ and at least two vertices from $W$ do not get damaged. 
  Indeed, vertex $v$ stays undamaged, because there are no two consecutive rounds where $v$ is not occupied by the cop. 
  Furthermore, the only rounds where some vertex in $W$ can get damaged are by definition the rounds $i_1, i_1+1, i_2, i_2+1, i_3, i_3+1$, and so on. 
  Observe that after round $i_r+1$ the last remaining robber is caught (of the robbers that ever enter $W$).
   Furthermore, for all $j=1,\dots, r$, we have that in the rounds $i_j, i_j+1$ at most $s-j$ robbers are not caught. In these two rounds, each one of them can damage at most one vertex from $W$, because no two vertices of $W$ are connected by an edge. In total, the robbers damage at most $(s-1) + (s-2) + \dots + 1 = \binom{s}{2}$ vertices, so at least 2 vertices from $W$ survive. Therefore the cop saved three vertices.
\end{proof}

The strategy described in the proof of \cref{lemma:induced-star} will be useful in other situations too, so we give it a name. We define the strategy $\guard(v)$ to be the strategy for the cop of initially starting at vertex $v$ and waiting until a robber enters $N(v)$, then catching this robber, and then immediately going back to $v$. (Even if there is a second robber in $N(v)$ that could potentially be caught, the cop still moves back to $v$.)

First we prove that the upper bound in Theorem~\ref{t:main-prime} holds.

\begin{proposition}
For all $s \geq 2$, we have $\Delta_s' \leq \binom{s}{2} + 2$.
\end{proposition}
\begin{proof}
Let $t := \binom{s}{2} + 2$. If $G$ is triangle-free and $\Delta(G) \geq t$, then $G$ contains an induced star $K_{1,t}$. By Lemma~\ref{lemma:induced-star} this proves that the cop can save three vertices.
\end{proof}

The following \cref{prop:upper-bound} ensures the upper bound in Theorem~\ref{t:main}. For the proof we require a helpful lemma, by Carlson, Halloran, and Reinhart~\cite{carlson2022multi}.
\begin{lemma}\cite[Proposition~2.7]{carlson2022multi}
\label{lem:helper-lemma}
    If $G$ is any graph containing a vertex with three neighbors, then one cop can protect at least three vertices against two robbers.
\end{lemma}

\begin{proposition}
 \label{prop:upper-bound}
For all $s \geq 2$, we have $\Delta_s \leq 2\binom{s}{2} + 1$.
\end{proposition}
\begin{proof}
Let $s \geq 2$. We have to prove that for all graphs $G$ with $\Delta(G) \geq 2 \binom{s}{2}+1$, the cop can save three vertices. 
For this purpose, assume $G$ contains a vertex $v$ of degree at least $2\binom{s}{2} +1$. 
The general approach of the cop is to apply the strategy $\guard(v)$ until only two robbers remain. 

First of all, if the robbers never enter $N(v)$ all the vertices in the neighborhood will remain intact and we are done.

Otherwise, assuming $s>2$, let $i$ be the first round such that the following holds: the cop moved from vertex $v$ to catch a robber in round $i-1$ and returned to vertex $v$ in round $i$, at that point there are at most two remaining robbers, and it is their turn to move in round $i$. If $s=2$, then let $i=1$.

We claim that in this exact situation, in round $i$ where the cop has moved (and finds himself on $v$) and the robbers have not yet moved, there are at least three vertices in $N(v)$ which are not yet damaged and which are not currently occupied by a robber.

 Indeed, if $s>2$, observe that previous to that point, whenever the cop catches a robber and goes back to $v$, the $k$ robbers that are still not captured can enter at most $2k$ vertices from $N(v)$ in these two rounds. Furthermore, $v$ itself is never damaged. 
 It then follows that the number of vertices in $N(v)$ which were entered by a robber is at most $2(s-1) + 2(s-2) + \dots + 2\cdot2 = 2\binom{s}{2} - 2$. 
 Therefore at that point there are at least three vertices in $N(v)$ which are not damaged and not occupied by a robber. 
 If $s=2$, the same condition for $N(v)$ is satisfied at the start of the game.
The proof now simply follows by \cref{lem:helper-lemma}.
\end{proof}

\subsection{Lower bounds}

The goal of this subsection is to give examples of graphs with high degree, where the cop can not save three vertices. In particular, to show lower bounds for $\Delta'_s$ and $\Delta_s$, we introduce the graphs $G'$ and $G$ which are depicted in \cref{fig:lower-bound-graphs}.
\begin{figure}[htpb]
\centering
\hfill\includegraphics[scale=1,page=1]{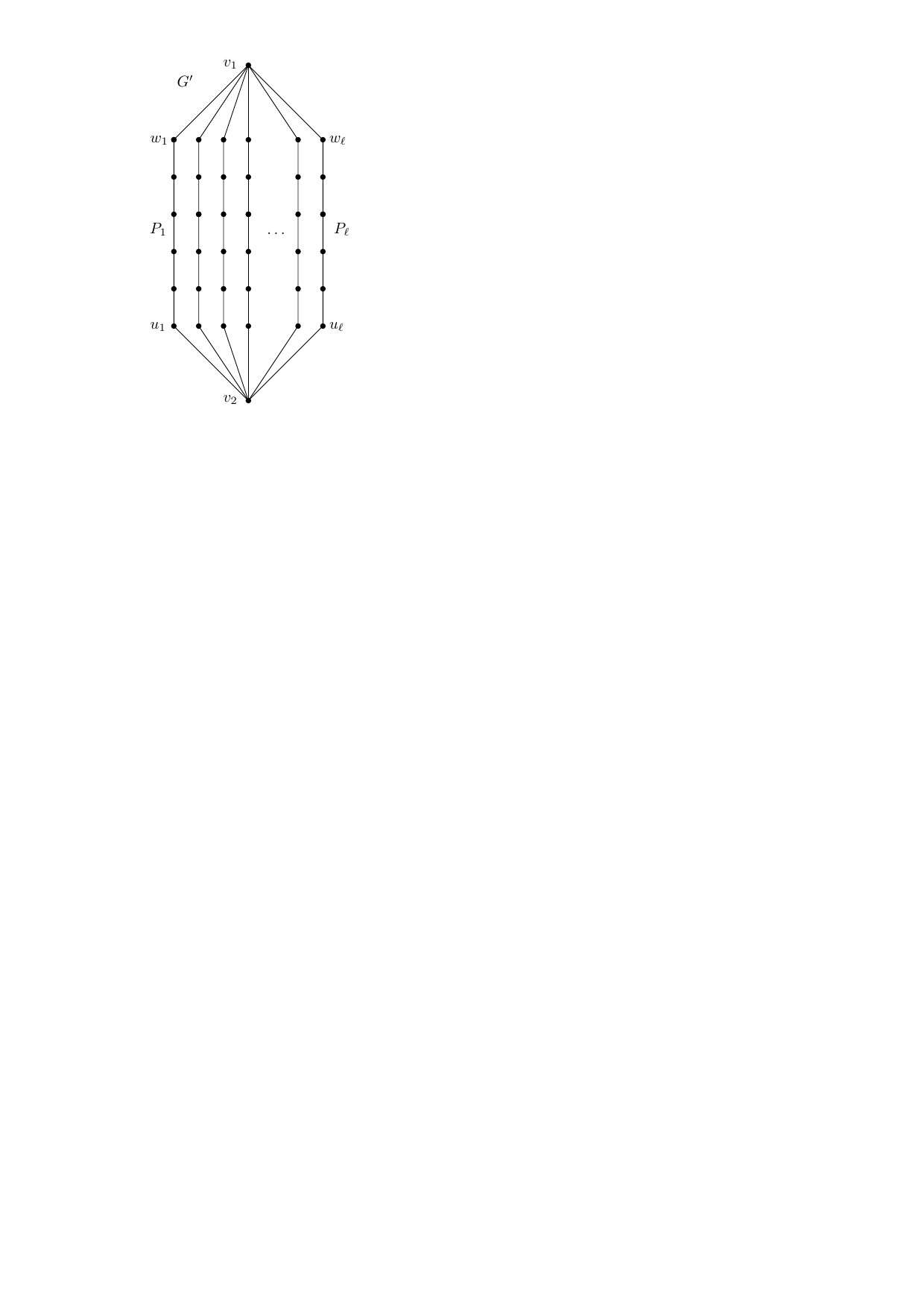}
\hfill
\includegraphics[scale=1,page=2]{lower-bound-graphs}
\hfill \phantom{a}
\caption{Graphs $G'$ and $G$ used for proving the lower bounds on $\Delta'_s$ and $\Delta_s$.}
\label{fig:lower-bound-graphs}
\end{figure}
The graph $G'$ consists of vertices $v_1$ and $v_2$ and a total of $\ell$ internally vertex-disjoint paths $P_1,\dots, P_{\ell}$ of length 7 from $v_1$ to $v_2$. If $N(v_1) = \set{w_1,\dots,w_{\ell}}$ and $N(v_2) = \set{u_1,\dots,u_{\ell}}$, then $G$ is created from $G'$, with an additional assumption that $\ell$ is even, by adding the edges $\set{w_1,w_2}, \set{w_3,w_4}, \dots, \set{w_{\ell-1}, w_{\ell}}$ and  $\set{u_1,u_2}, \set{u_3,u_4}, \dots, \set{u_{\ell-1}, u_{\ell}}$.

We now proceed to prove some helpful auxiliary statements which establish that the robbers can achieve certain goals when playing on the graphs $G$ and $G'$. A \emph{great cycle} in $G$ is a cycle of length 14 which contains both $v_1$ and $v_2$ and has no chord. In other words, a great cycle in $G'$ is created by any two paths $P_i, P_j$ with $i \neq j$, and a great cycle in $G$ is created by any two paths $P_i, P_j$ with $\lceil i/2 \rceil \not= \lceil j/2 \rceil$.

We will say that a robber is \emph{cautious} if his first priority is not to get caught by the cop. When we say that a cautious robber is playing according to a certain strategy, we mean that he plays according to the strategy unless that takes him within the cop's neighborhood, in which case he suspends the strategy and avoids getting caught. 

\begin{lemma} \label{l:stay-on-cycle}
If a cautious robber finds himself on a great cycle $\tilde C$ of $G$ (respectively $G'$), at distance at least two from the cop's position, he can stay on $\tilde C$ indefinitely.
\end{lemma}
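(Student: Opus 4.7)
The plan is to show that the robber can preserve the invariant ``robber is on $C$ and at distance at least $2$ from the cop'' across every round; since ``cautious'' guarantees this invariant holds at the initial moment the robber is placed on $C$, indefinite persistence follows by induction.

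Consider a round beginning with the robber at $r \in C$ and the cop at some vertex $c$ with $\mathrm{dist}(r,c) \geq 2$. The cop moves to some $c' \in N[c]$, so $\mathrm{dist}(r,c') \geq 1$. The robber's three cycle-options are the consecutive cycle-vertices $r_-, r, r_+$ (all distinct since $|C|=14$). I would argue that at least one of them lies outside $N[c']$, so the robber can safely step to it and preserve the invariant; the case analysis is on whether $c'$ lies on $C$ or not.

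If $c' \in C$, then since $C$ is an induced $14$-cycle, $N[c'] \cap C$ consists of exactly the three consecutive cycle-vertices centered at $c'$, so $\{r_-, r, r_+\} \subseteq N[c']$ would force $c' = r$, contradicting $\mathrm{dist}(r,c') \geq 1$. If $c' \notin C$, I claim $|N[c'] \cap C| \leq 2$, which again makes it impossible for $N[c']$ to cover all three cycle-options. The vertices outside $C$ are exactly the interior vertices of the paths $P_k$ with $k \notin \{i,j\}$ (where $C$ is formed by $P_i$ and $P_j$); those other than $w_k, u_k$ have degree $2$ with both neighbors inside $P_k$ and hence off $C$, while $w_k$ has at most two neighbors on $C$ (namely $v_1$, and in $G$ possibly its paired partner $w_{k'}$, which lies on $C$ only if $k' \in \{i,j\}$), and symmetrically for $u_k$.

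The main obstacle is the second case, which uses two structural features of $G$ and $G'$: the chord-freeness of $C$ (ensuring the tight bound $3$ on cycle-neighborhoods of cycle-vertices) and the pairing in $G$ linking each $w_k$ to at most one partner, which prevents an external vertex from having three cycle-neighbors. Once these structural observations are in hand, the lemma reduces to the short dichotomy above.
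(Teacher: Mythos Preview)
Your proof is correct and follows the same idea as the paper's, which dispatches the lemma in a single sentence (``when the cop gets into the neighborhood of $v$ there is always a vertex on $C$ that the robber can move to and stay out of cop's reach''). You have simply supplied the structural verification the paper leaves implicit: that chord-freeness of $C$ forces $|N[c']\cap C|=3$ for $c'\in C$, and that the path/pairing structure of $G'$ and $G$ forces $|N(c')\cap C|\le 2$ for $c'\notin C$, so the three cycle-options $r_-,r,r_+$ can never all be covered.
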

\begin{proof}
No matter on which vertex $v\in V(\tilde C)$ the robber is, when the cop gets onto some vertex $w \in N(v)$ there is always a vertex on $\tilde C$ that the robber can move to and stay out of the cop's reach (even if $w \not \in V(\tilde C)$).
\end{proof}

\begin{lemma} \label{l:2-on-cycle}
Assume that there are at least three robbers positioned on vertices of $G$ (respectively $G'$), such that they are all at distance at least two from the cop's position. If $\tilde C$ is a great cycle, then the robbers have a strategy such that no robber gets caught and at least two robbers can enter $\tilde C$ and stay on it indefinitely.
\end{lemma}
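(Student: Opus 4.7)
Proof proposal.

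The plan is to have each robber play a cautious strategy aimed at reaching $C$. Any robber that already lies on $C$ simply plays according to \cref{l:stay-on-cycle}, so it suffices to treat the case where two or more robbers are off $C$; we shall show that at least two of these reach $C$. Any such off-$C$ robber sits on the interior of some path $P_k$ not used by $C$, and can enter $C$ only by first arriving at $v_1$ or $v_2$, the unique shared vertices of the paths.

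First, I would fix a common target endpoint. Since $d_G(v_1,v_2) = 7$ in both $G$ and $G'$ (the extra edges $\set{w_{2i-1},w_{2i}}$ and $\set{u_{2i-1},u_{2i}}$ do not create a shorter $v_1$--$v_2$ walk), the cop's position is at distance at least $4$ from at least one of $\set{v_1,v_2}$; call this endpoint $t$. Each off-$C$ robber plans to walk along his own path $P_k$ toward $t$. Since each robber is at most at the midpoint of a path of length $7$, his distance to $t$ along $P_k$ is at most $4$. Because each party moves one step per round and each off-$C$ path is internally just a chain, the cop cannot overtake a robber moving monotonically toward $t$, and also cannot reach $t$ before the robbers do.

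The main step is to verify that the cop cannot simultaneously stop two off-$C$ robbers. At each round the cop's closed neighborhood $N[c]$ consists of a single vertex together with its few neighbors, and a cautious robber only needs to avoid vertices in $N[c]$. Since the three robbers start at distinct vertices and each has its own planned next step along a length-$7$ path, I would argue by pigeon-hole that in any given round the cop can threaten the planned forward step of at most one robber. That one robber can wait or take a safe side-step while the other two continue to advance; and because moving the cop from one of $\set{v_1,v_2}$ to the other costs at least $7$ rounds while a robber only needs $\le 4$, the cop cannot redirect in time to block arrivals at $t$. Once two robbers arrive at $t$, they are on $C$, and by \cref{l:stay-on-cycle} they can stay on $C$ indefinitely.

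The hard part, I expect, is making the "at most one blocked robber per round" claim fully rigorous when two or more robbers happen to live on the same path $P_k$, or when their remaining routes to $t$ converge through $N[v_1]$ or $N[v_2]$ and so can potentially be blocked by a single well-placed cop move. I would handle this through a case analysis on the cop's starting region (on $C$ near $v_1$, on $C$ near $v_2$, on an off-$C$ path, or at $v_1$/$v_2$ itself), in each case exhibiting an explicit assignment of targets to the three robbers that keeps at least two of them on a shortest, always-safe route into $C$.
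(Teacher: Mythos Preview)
Your argument contains a genuine gap. The assertion that ``each robber is at most at the midpoint of a path of length $7$'' and hence at distance at most $4$ from the chosen endpoint $t$ is neither a hypothesis of the lemma nor a consequence of anything established; an off-$C$ robber may sit anywhere in the interior of its path, in particular at distance $6$ from $t$. Concretely, place all three robbers at $w_3\in N(v_1)$ on an off-$C$ path $P_3$, and the cop three steps into another off-$C$ path $P_4$ measured from $v_1$. Then the cop is at distance $3$ from $v_1$ and $4$ from $v_2$, so your rule selects $t=v_2$; but the robbers are at distance $6$ from $t$ while the cop is at distance $4$, so the cop reaches $v_2$ first and, parked there, prevents every cautious robber from entering $N(v_2)$ along $P_3$. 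Your claim that the cop ``cannot reach $t$ before the robbers do'' therefore fails, and the single-target plan stalls. (The side remark that ``the three robbers start at distinct vertices'' is also not given.) The closing paragraph concedes that the decisive cases---robbers sharing a path, or routes converging through $N[v_1]$ or $N[v_2]$---still require a case analysis that is not supplied.

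The paper sidesteps all of this by sending robbers toward \emph{opposite} endpoints rather than a common one. Given any two robbers, label them so that $R_1$ is no farther from $v_1$ than $R_2$; then $R_1$ heads cautiously to $v_1$ while $R_2$ heads cautiously to $v_2$. Since $v_1$ and $v_2$ are far apart, the cop can obstruct at most one of these approaches, so at least one robber reaches $C$ and stays there by \cref{l:stay-on-cycle}. Having started with at least three robbers, at least two remain off $C$ if only one succeeded, and the same two-robber splitting argument is applied once more. This is both shorter than your proposed case analysis and immune to the ``cop camps at $t$'' obstruction that breaks the single-target strategy.
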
  
\begin{proof}
First we show that two robbers, $R_1$ and $R_2$, can make sure that one of them gets onto $\tilde C$. Suppose w.l.o.g.~that $R_1$ is not further away from $v_1$ than $R_2$, then $R_1$ will cautiously move towards $v_1$, and $R_2$ will cautiously move towards $v_2$. Since the robbers start at distance two away from the cop, the cop cannot catch them in the first round. Since they move into opposite directions, the cop cannot stop both of them at the same time from moving closer to their target. Therefore at least one of the robbers will manage to reach his target, thus getting on $\tilde C$.

Now we apply this reasoning twice. First we pick arbitrary two robbers and one of them moves onto $\tilde C$. Note that he can stay on $\tilde C$ indefinitely without getting caught, by~Lemma~\ref{l:stay-on-cycle}. If no other robber is on $\tilde C$ at that point, then there exist two robbers that are not on $\tilde C$. Then one of them moves onto $\tilde C$ using the same strategy.
\end{proof}

Given a great cycle $\tilde C$ of either $G$ or $G'$ and three robbers $R_1, R_2, R_3$, we are going define a strategy $\textsc{cycle-attack}(\tilde C, R_1, R_2, R_3)$. The idea behind this strategy is that the robbers, without getting caught by the cop, either manage to damage all vertices of $\tilde C$, or do a considerable damage to the rest of the graph.

Formally, the strategy $\textsc{cycle-attack}(\tilde C, R_1, R_2, R_3)$ is defined as follows. 
All three robbers remain cautious for the whole duration of the strategy. As soon as all vertices of $\tilde C$ are damaged, the $\textsc{cycle-attack}$ is concluded.

In Stage~1, the robbers $R_1$ and $R_2$ enter the cycle $\tilde C$, on which they are going to stay for the remainder of the $\textsc{cycle-attack}$. In Stage~2, the robber $R_1$ attempts to move clockwise (CW) around $\tilde C$ while the robber $R_2$ attempts to move counter-clockwise (CCW) around $\tilde C$.

If during Stage~2 the robbers $R_1$ and $R_2$ are both halted while failing to damage all the vertices of $\tilde C$, Stage~3 starts. In Stage~3 the robbers $R_1$ and $R_2$ now swap their directions, $R_1$ going CCW on $\tilde C$ and $R_1$ going CW on $\tilde C$ for the remainder of the $\textsc{cycle-attack}$.
If after that they again both stop their rotational movement before all the vertices of $\tilde C$ are damaged, Stage~4 starts and the robber $R_3$ comes into play. Suppose that w.l.o.g.~at that point the cop is closer to $v_2$ than to $v_1$. The robber $R_3$ goes to $v_1$, and then for every $i \in \set{1,\dots,\ell}$ he attempts to move down the path $P_i$ from $v_1$ towards $v_2$ for as long as possible while staying cautious, returning to $v_1$ between each two paths. Meanwhile, the robbers $R_1$ and $R_2$ remain on the same task as in Stage~3. Once the robber $R_3$ has executed this strategy for all the paths $P_i$ for $i \in \set{1,\dots,\ell}$, Stage~4 and the whole $\textsc{cycle-attack}$ is concluded.

\begin{lemma} \label{l:attack}
Let $\tilde C$ be a great cycle of $G$ (respectively $G'$). If there are at least three robbers positioned at distance at least two from the cop's position, then they can execute the strategy $\textsc{cycle-attack}(\tilde C, R_1, R_2, R_3)$ in finitely many moves without being caught.
\end{lemma}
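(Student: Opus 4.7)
The plan is to verify, stage by stage, that each phase of the $\textsc{cycle-attack}(C, R_1, R_2, R_3)$ strategy can be executed in finitely many rounds while keeping all three robbers uncaught. Since all three robbers act cautiously by definition, the ``uncaught'' part will follow as soon as we check that each prescribed movement can always be realized without a cautious robber being forced into the cop's neighborhood; the finiteness part will follow from monotone-progress arguments using the fact that $|V(C)| = 14$ and that there are only $\ell$ paths $P_i$.

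Stage~1 is immediate from \cref{l:2-on-cycle}: starting from robbers at distance at least $2$ from the cop, at least two of them (namely $R_1$ and $R_2$) can reach $C$ in finitely many rounds. From that point on, \cref{l:stay-on-cycle} guarantees that $R_1$ and $R_2$ can remain cautiously on $C$ for the rest of the play, which already settles the safety claim for Stages~2 and~3. For the finiteness of Stage~2, I would argue that in each round, each of $R_1$ (going CW) and $R_2$ (going CCW) either (a) successfully takes its intended step along $C$, in which case a new vertex of the 14-cycle is visited, or (b) is blocked by the cop and stays put on $C$. Since $C$ has only $14$ vertices, option (a) can occur only finitely many times overall, so eventually one of two things happens: all of $C$ is damaged (and the $\textsc{cycle-attack}$ concludes), or neither robber can any longer make a cautious CW/CCW step, which is exactly the ``halted'' condition triggering Stage~3. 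Stage~3 is identical with reversed directions, and the same argument yields termination in finitely many rounds.

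For Stage~4, assume w.l.o.g.\ that the cop is closer to $v_2$ than to $v_1$ when Stage~4 begins. Because $G$ (respectively $G'$) has $\ell$ internally vertex-disjoint length-$7$ paths between $v_1$ and $v_2$, the same ``run around the cop'' reasoning as in \cref{l:2-on-cycle} lets the cautious robber $R_3$ reach $v_1$ in finitely many moves. Once at $v_1$, $R_3$ walks cautiously down each path $P_i$ as far as possible and returns to $v_1$; each round trip along a length-$7$ path takes at most $14$ rounds, so all $\ell$ paths are processed in $O(\ell)$ rounds, which is finite. Throughout, $R_3$ remains cautious, so he is never caught.

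The main obstacle, and the step that will require the most care when expanded into a formal proof, is making rigorous the ``halted'' bookkeeping in Stages~2 and~3: one must check that whenever the intended CW or CCW step is unsafe, a cautious robber can safely remain at his current vertex of $C$ (as opposed to being chased off $C$), and one must specify unambiguously at which round the robbers declare themselves halted and switch stages. This reduces to a local analysis around $v_1$ and $v_2$, using that $C$ is chordless and the paths $P_i$ have length $7$, so that a single cop can threaten at most one local portion of $C$ at any time and the two robbers on $C$ can always either advance or sit still on $C$ without risk.
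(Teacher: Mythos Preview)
Your stage-by-stage structure and the appeals to \cref{l:2-on-cycle} and \cref{l:stay-on-cycle} for Stages~1--3 match the paper's argument, and the monotone-progress reasoning for finiteness of Stages~2 and~3 is adequate. The genuine gap is in your treatment of Stage~4.

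You justify only that $R_3$ can reach $v_1$ \emph{once}, relying on the cop being closer to $v_2$ at the moment Stage~4 begins. But the strategy requires $R_3$ to return to $v_1$ between every two paths $P_i$, and nothing you wrote prevents the cop from simply relocating to $v_1$ (or into $N[v_1]$) after the first excursion, which would block all further returns and make the ``at most $14$ rounds per path, $O(\ell)$ total'' bound meaningless. Your invocation of \cref{l:2-on-cycle} does not help here: that lemma uses three robbers to route around the cop, whereas in Stage~4 the other two robbers are committed to $C$.

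The missing idea is that $R_1$ and $R_2$ are still active on $C$ throughout Stage~4, continuing in their Stage~3 directions. At the start of Stage~4 at most three consecutive vertices of $C$ are undamaged, and the cop must stay within distance one of at least one of them or else $R_1$ and $R_2$ will finish damaging $C$ and the \textsc{cycle-attack} terminates. This pins the cop away from $N[v_1]$ (given the ``closer to $v_2$'' assumption and the length-$7$ geometry), which is exactly what guarantees $R_3$ an unobstructed route to $v_1$ before \emph{each} path traversal. Your write-up identifies the Stage~2/3 bookkeeping as the main obstacle, but that part is routine; the interaction between $R_1,R_2$ pinning the cop and $R_3$ operating freely at $v_1$ is the crux, and it is absent from your argument.
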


\begin{proof}
Stage~1 can be completed by Lemma~\ref{l:2-on-cycle}. If in Stage~2 the robbers $R_1$ and $R_2$ stop moving around the cycle, that means that the cop is at distance at most two from each of them. Note that at most three vertices of $\tilde C$ can be between them at that point. 

Once Stage~3 starts the robbers $R_1$ and $R_2$ will clearly be able to meet each other on $\tilde C$ after changing the orientation of their rotational movement. If after that they are both halted, at most three vertices on $\tilde C$ can remain undamaged. Note that from that point on, if the cop wishes to protect at least one of the three remaining undamaged vertices of $\tilde C$, the cop must remain tied up within distance one from at least one of those three vertices to the end of the $\textsc{cycle-attack}$.
 
During Stage~4, as long as the cop remains at distance at most one to at least one undamaged vertex of $\tilde C$ he will not be able to enter the neighborhood of $v_1$ (assuming w.l.o.g.~that $v_1$ was chosen in Stage~4). Hence, the robber $R_3$ will be able to repeatedly find an unobstructed route to $v_1$, before starting the traverse for each of the paths $P_i$.

Clearly, the duration of each of the stages is limited and thus the $\textsc{cycle-attack}$ must conclude.
\end{proof}

\begin{lemma} \label{l:neighborhood}
If $\textsc{cycle-attack}(\tilde C, R_1, R_2, R_3)$ is performed on $G$ (respectively $G'$) for every great cycle $\tilde C$, then all of its undamaged vertices are contained in the closed neighborhood of a single vertex.
\end{lemma}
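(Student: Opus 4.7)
I would analyze the final set $U$ of undamaged vertices by combining two features of the cycle-attack that follow from \cref{l:attack}. First, after cycle-attack$(C)$ has concluded, at most three vertices of $C$ remain undamaged; since damage is permanent, $|U \cap C| \leq 3$ for every great cycle $C$. Second, whenever stage~4 of cycle-attack$(C)$ is executed, the cop must remain within distance one of some vertex on $C$ throughout stage~4, otherwise the robbers $R_1, R_2$, still sitting on $C$, would damage the three surviving vertices on $C$.

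The key step is to understand what stage~4 damages \emph{off} of $C$. Suppose cycle-attack$(C)$ for $C = P_a \cup P_b$ reaches stage~4 with the cop closer to $v_2$ than to $v_1$ at that moment. Then $R_3$ is free to walk to $v_1$ and traverse each other path $P_i$, $i \notin \{a,b\}$. A short distance computation along the paths of $G$ (respectively $G'$) shows that, while the cop is pinned on $C$, $R_3$ cautiously reaches every internal vertex of $P_i$ whose graph distance to the cop is at least two. Consequently, the only internal vertices of $P_i$ that can survive stage~4 are those in $N[v_2]$, namely $u_i$ (and this only when the cop sits right at $v_2$).

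Using these tools I would distinguish two cases. If some cycle-attack fully damages its cycle $C_0 = P_\alpha \cup P_\beta$ during stages 2--3, then $v_1, v_2$ and the internal vertices of $P_\alpha, P_\beta$ are damaged, and any surviving vertex is internal to some other $P_i$; applying the stage~4 observation to further cycle-attacks of the form $P_\alpha \cup P_i$ propagates damage to the interior of every remaining path, leaving $U$ trivially small. Otherwise every cycle-attack reaches stage~4, and I would focus on the final cycle-attack, say on $C^* = P_{a^*} \cup P_{b^*}$: its stage~4 damages essentially everything outside $C^*$ except possibly vertices in $N[v_2]$, while the at most three undamaged vertices on $C^*$ are concentrated within distance one of the pinned cop, who by the WLOG assumption is near $v_2$. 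Together these place $U$ inside $N[v_2]$ (or $N[v_1]$ by symmetry).

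The main obstacle I anticipate is the per-attack asymmetric ``WLOG'' choice: different cycle-attacks may place the cop on different sides of their respective cycles, so a vertex near $v_1$ that survives an earlier attack could in principle avoid the damage of a later attack concentrated near $v_2$. To handle this I would use that the final cycle-attack is thorough enough on its own, and then exploit the constraint $|U \cap C| \leq 3$ for every great cycle to rule out configurations in which leftover survivors from earlier attacks land in a different closed neighborhood than the survivors of the final attack, collapsing everything into a single $N[v]$.
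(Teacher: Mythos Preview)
Your approach is structurally different from the paper's. The paper does not single out any one cycle-attack; instead it fixes a hypothetical undamaged vertex $x \notin N[v_1] \cup N[v_2]$, observes that the cop must have remained on the unique path $P_j$ through $x$ during Stage~4 of \emph{every} cycle-attack on a great cycle containing $P_j$, and reads off from $R_3$'s sweep in those stages that all undamaged vertices lie in $N[u_j]$, in $N[w_j]$, or among three consecutive internal vertices of $P_j$. The residual case $U \subseteq N[v_1] \cup N[v_2]$ is dispatched by noting the cop cannot protect both sides at once. Anchoring on an undamaged vertex rather than on a cycle-attack sidesteps exactly the ``per-attack WLOG'' obstacle you flag.

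Your line can probably be completed, but as written there is a real gap. You assert that in the final cycle-attack the pinned cop ``by the WLOG assumption is near $v_2$,'' and conclude $U \subseteq N[v_2]$. The WLOG in the definition of Stage~4, however, only says the cop is \emph{closer} to $v_2$ than to $v_1$ at the start of Stage~4; on a length-$7$ path this allows the cop, and hence the three surviving vertices of $C^*$, to sit at internal vertices of $P_{a^*}$ three or four steps away from $v_2$, not in $N[v_2]$ at all. In that mid-path scenario $R_3$'s sweep from $v_1$ actually reaches every $u_i$, so nothing forces the off-cycle survivors into $N[v_2]$ either. The correct target is $U \subseteq N[m]$ for the cop's pinned location $m$, and establishing this requires a case split on whether $m$ is $v_2$, some $u_j$ or $w_j$, or strictly internal to a path --- which is precisely the analysis the paper performs (just indexed by $x$ rather than by $m$). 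The fallback you sketch via $|U \cap C| \leq 3$ does not by itself force $U$ into a single closed neighborhood (for instance, two survivors mid-$P_1$ and one mid-$P_3$ satisfy that constraint for every great cycle), so it cannot replace the missing case analysis.
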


\begin{proof}
Let us assume that $\textsc{cycle-attack}(\tilde C, R_1, R_2, R_3)$ was performed on $G$ or $G'$ for every great cycle $\tilde C$.

Suppose first that a vertex $x$ that is neither in $N[v_1]$ nor in $N[v_2]$ remains undamaged. Due to the symmetry of the base graph w.l.o.g.~we can assume that $x \in V(P_1)$. As the closed neighborhood of $x$ belongs to $V(P_1)$, the cop must have stayed on $P_1$ in Stage 4 of every $\textsc{cycle-attack}(\tilde C, R_1, R_2, R_3)$ where $\tilde C$ contained $P_1$. If during any of these implementations of Stage 4, the cop got to either $u_1$ or $w_1$, then all the undamaged vertices of the base graph must be, respectively, either in $N[u_1]$ or in $N[w_1]$. On the other hand, if in all  those implementations of Stage 4, the cop remained out of both $N[v_1]$ and $N[v_2]$, then all the undamaged vertices of the base graph must be consecutive vertices of $P_1$, at most three of them, which are clearly within a closed neighborhood of a vertex.

The case that remains to be considered is when all of the undamaged vertices are in $N[v_1] \cup N[v_2]$. Clearly, it cannot be that there are undamaged vertices in both $N[v_1]$ and $N[v_2]$, as the cop cannot simultaneously be at both of the neighborhoods during the $\textsc{cycle-attack}$'s. Hence, the undamaged vertices are either all within $N[v_1]$ or all within $N[v_2]$.
\end{proof}

We are now ready to prove the two main results of this section. The first one gives the lower bound in Theorem~\ref{t:main-prime}. Recall that this theorem concerns the triangle-free setting.

\begin{proposition}
For all $s \geq 2$, we have $\Delta_s' \geq \binom{s}{2} + 2$.
\end{proposition}

\begin{proof}
First, let $s = 2$. Consider the triangle-free graph $G' = C_n$ for some $n \geq 5$. Then $G'$ has maximum degree $2 = \binom{s}{2} + 1$. We need to show that for the game with $s=2$ robbers and one cop, the cop cannot save three vertices. Indeed, this is true: One robber can cautiously go clockwise, while the other cautiously goes counterclockwise. At the end, both robbers are still alive while only three vertices remain undamaged. Then one of the robbers sacrifice themselves to get caught, while the other damages one additional vertex. 

We now generalize this argument to all $s > 2$.
We will show that $s$ robbers have a strategy to damage all but at most two vertices, when playing on the triangle-free graph $G'$ from \cref{fig:lower-bound-graphs} with $\ell = \binom{s}{2}+1$ paths. 

If $s>2$, an arbitrarily chosen three robbers $R_1$, $R_2$, $R_3$ will first perform the $\textsc{cycle-attack}(\tilde C, R_1, R_2, R_3)$ for every great cycle $\tilde C$ of $G'$. By Lemma~\ref{l:neighborhood}, all of the undamaged vertices are contained in the closed neighborhood of a single vertex. Lemma~\ref{l:attack} ensures that the three involved robbers will not be caught in the process, while the remaining robbers can clearly remain cautious.

Let us first suppose that all the undamaged vertices are in the closed neighborhood of either $v_1$ or $v_2$, w.l.o.g.~assume they are all in $N[v_2]$. That means there are at most $\binom{s}{2}+2$ undamaged vertices. For the remainder of the game, the $k$ robbers that are not captured yet repeatedly perform the following strategy that we call $\textsc{all-out-attack}(k)$.

We start with the \emph{preparation phase}, whose aim is to keep the cop occupied so that all of the robbers can eventually reach $v_1$.
A great cycle $\tilde C$ containing two undamaged vertices $u_i$ and $u_j$ is chosen, and some three robbers $R_1, R_2, R_3$ perform the $\textsc{cycle-attack}(\tilde C, R_1, R_2, R_3)$ on the cycle $\tilde C$. If they manage to damage either $u_i$ and $u_j$, the preparation phase is restarted with another great cycle. Note that such restarts are possible as long as there are at least two undamaged vertices among $u_1,\dots, u_\ell$.
Otherwise, the only way that both $u_i$ and $u_j$ remain undamaged is that the cop occupies $v_2$ and is surrounded by the two robbers, say $R_1$ and $R_2$, that went around $\tilde C$.

The strategy continues with all the robbers, except $R_1$ and $R_2$ (who remain pinning the cop on $v_2$), gathering on $v_1$. Again, if at any point the cop leaves $v_2$ one of the two vertices $u_i$ and $u_j$ gets damaged by either $R_1$ or $R_2$,  the preparation phase is restarted. Otherwise, all the other robbers will manage to gather on $v_1$. Once that is done, $R_1$ and $R_2$ also move to $v_1$ (which is possible, since the cop occupies $v_2$). Note that this way $R_1$ and $R_2$ maintain a distance of at least two from the cop.

Once all the robbers are on $v_1$ the \emph{attack phase} starts. Each of the robbers picks a different path $P_i$ such that $u_i$ is undamaged, and they simultaneously move down their picked paths towards $u_i$. Even if a robber gets within distance two from the cop, the robber proceeds towards $u_i$ risking getting caught. Each robber that reaches its designated $u_i$ becomes cautious immediately, and the $\textsc{all-out-attack}(k)$ is complete.

Note that in the preparation phase there are no caught robbers. Furthermore, each time a preparation phase is restarted, at least one of the undamaged neighbors of $v_2$ is damaged. Hence, the number of restarts is limited.

Now during the attack phase with $k$ robbers at most one robber gets caught, as once the cop catches one robber performing the attack he cannot reach another one before the attack is complete and the remaining robbers become cautious.
That means that at least $k-1$ undamaged vertices in $N[v_2]$ get damaged during the attack. 

All in all, the $\textsc{all-out-attack}(k)$ will get completed for $k=s, s-1, \dots, 2$, damaging at least $(s-1)+(s-2)+\dots+1=\binom{s}{2}$ undamaged vertices. That means all but $\binom{s}{2}+2 - \binom{s}{2} = 2$ vertices are damaged, and we are done.

In the case when all the undamaged vertices are in the closed neighborhood of a vertex $x$ that is neither $v_1$ nor $v_2$, the situation is much simpler. It is enough to perform an attack analogous to the all out attack above, but with just two robbers on two internally disjoint paths towards $x$, and the number of undamaged vertices will get reduced to 2.
\end{proof}

In the following, we prove the lower bound in Theorem~\ref{t:main}. Recall that this theorem is concerned with the case of general, not necessarily triangle-free graphs.

\begin{proposition}
For all $s \geq 2$, we have $\Delta_s \geq 2\binom{s}{2} - 3$.
\end{proposition}

\begin{proof}
The statement trivially holds for $s=2$ and $s=3$, so from now on we can assume that $s \geq 4$. We will show that $s$ robbers have a strategy to damage all but at most two vertices, when playing on $G$ with $\ell = 2\binom{s}{2}-4$, for which we have $\Delta(G)=2\binom{s}{2} - 4$. 

An arbitrarily chosen three robbers $R_1$, $R_2$, $R_3$ will first perform the $\textsc{cycle-attack}(\tilde C, R_1, R_2, R_3)$ for every great cycle $\tilde C$ of $G$. By Lemma~\ref{l:neighborhood},  all of the undamaged vertices are contained in the closed neighborhood of a single vertex. Lemma~\ref{l:attack} ensures that the three involved robbers will not be caught in the process, while the remaining robbers can clearly remain cautious.

Let us first suppose that all the undamaged vertices are in the closed neighborhood of either $v_1$ or $v_2$, w.l.o.g.~assume they are all in $N[v_2]$. That means there are at most $2\binom{s}{2}-3$ undamaged vertices. From that point on, the $k$ robbers that are not captured yet repeatedly perform the following strategy that we call $\textsc{all-out-attack-2}(k)$, until there are only three robbers left.

In the \emph{preparation phase} a great cycle $\tilde C$ containing two undamaged vertices $u_i$ and $u_j$ is chosen, and some three robbers $R_1, R_2, R_3$ perform the $\textsc{cycle-attack}(\tilde C, R_1, R_2, R_3)$ on the cycle $\tilde C$. If they manage to damage either $u_i$ and $u_j$, the preparation phase is restarted with another great cycle. Note that such restarts are possible as long as there are at least three undamaged vertices among $u_1,\dots, u_\ell$. 
Otherwise, the only way that both $u_i$ and $u_j$ remain undamaged is that the cop is pinned on $v_2$ by the two robbers, say $R_1$ and $R_2$, that went around $\tilde C$.

The strategy continues with all the robbers, except $R_1$ and $R_2$ (who remain pinning the cop on $v_2$), gathering on $v_1$. Again, if at any point the cop leaves $v_2$ one of the two vertices $u_i$ and $u_j$ gets damaged by either $R_1$ or $R_2$, and the preparation phase is restarted. Otherwise, all the other robbers manage to gather on $v_1$. Once that is done, $R_1$ and $R_2$ also move straight towards $v_1$, thus maintaining a distance of at least two from the cop.

Once all the robbers are on $v_1$ the \emph{attack phase} starts. Each of the robbers picks a different pair of paths $P_{2i-1}$, $P_{2i}$ such that $u_{2i-1}$ and $u_{2i}$ are not both damaged (one of them might have been damaged during any of the previous preparation phases), and then they simultaneously move down $P_{2i-1}$ towards $u_{2i-1}$. Even if a robber gets within distance two from the cop, the robber proceeds towards $u_{2i-1}$ risking getting caught. Crucially, when a robber gets to distance one from the cop, he will not move onto the cop's vertex but rather wait, maintaining a distance one from the cop. If the cop does not get to the robber's vertex but decides to move towards $v_2$, the robber will follow tightly at distance one.
Every robber that gets to $u_{2i-1}$, then visits $u_{2i}$ and immediately after that gets cautious. Once all the robbers that are not caught in the process complete this task, the $\textsc{all-out-attack-2}(k)$ is finished.

Note that in the preparation phase there are no caught robbers. Furthermore, each time a preparation phase is restarted, at least one of the undamaged neighbors of $v_2$ is damaged. Hence, the number of restarts is limited.

Now during the attack phase with $k$ robbers at most one robber gets caught, as once the cop catches one robber performing the attack he cannot reach another one before the attack is complete and all the remaining robbers became cautious.
That means that at least $k-1$ pairs of vertices in $N[v_2]$ get damaged during the attack. (To be a bit more precise here, it is possible to associate to the attack phase $k-1$ pairs of vertices, such that different attack phases are associated with disjoint set of pairs, and after each attack we can be sure that all vertices from the associated pairs are in a damaged state -- either damaged before or during that attack phase.) 

All in all, the $\textsc{all-out-attack-2}(k)$ will get completed for $k=s, s-1, \dots, 4$, damaging at least $2(s-1)+2(s-2)+\dots+2\cdot (4-1)=2\binom{s}{2}-6$ vertices. That means all but $2\binom{s}{2}-3 - 2\binom{s}{2} - 6 = 3$ vertices are damaged.

The final three undamaged vertices are forming a triangle. The remaining three robbers can target one undamaged vertex each, approaching them on three disjoint paths. If they remain cautions on the approach, one more vertex will clearly get damaged, completing the proof in this case.

In the case when all the undamaged vertices are in the closed neighborhood of a vertex $x$ that is neither $v_1$ nor $v_2$, we again have a considerably simpler situation to analyze. No matter what vertex $x$ is, at most two coordinated attacks by three robbers towards the neighbors of $x$ are needed to reduce the total number of undamaged vertices to at most two.
\end{proof}

\section{Two cops and two robbers on paths and cycles}
\label{sec:two-cops-two-robbers}
Here we study the game played by two cops against two robbers, on two particular families of graphs -- cycles and paths. Eventually, we provide the proofs of Theorem~\ref{thm:damage-cycle} and Theorem~\ref{thm:damage-path}.

\subsection{Tools}

We start by proving several general results which will be utilized to establish the exact damage number on paths and  cycles. In all of them it is assumed that each of the cops and robbers has a certain starting position, and either the cops or the robbers follow a certain behavior. Note that, as usual, the cops have the first move. 
\begin{figure}[thpb]
    \centering
    \includegraphics[scale=1.1, page=2]{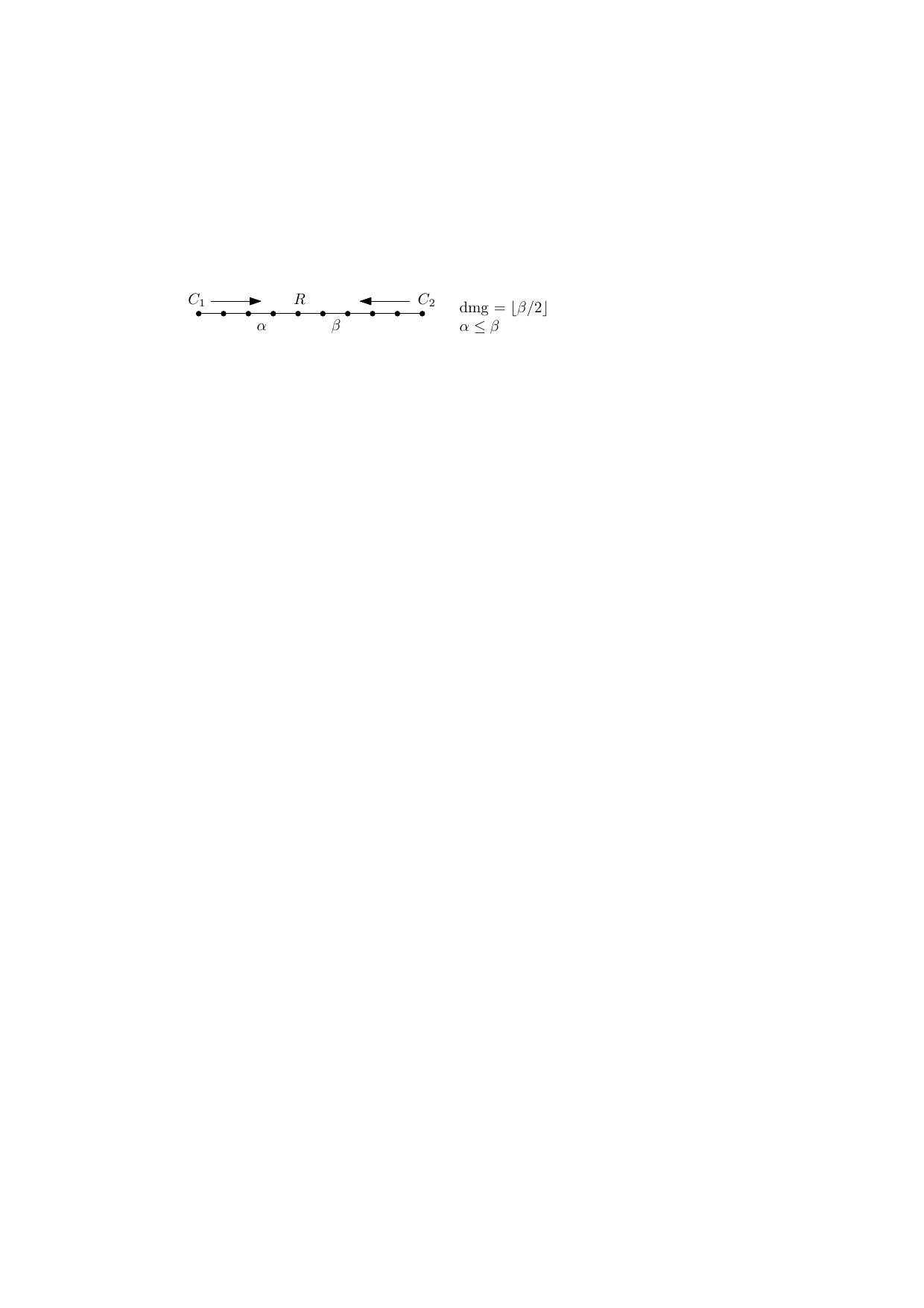}
    
    \vspace*{0.3cm}
    
    \includegraphics[scale=1.1, page=1]{path-lemma.pdf}
    
    \vspace*{0.3cm}
    
    \includegraphics[scale=1.1, page=3]{path-lemma.pdf}
    
    \caption{Positions of cops and robbers in \cref{obs:path,lemma:path-lemma,lemma:path-lemma-2} with $\alpha = 4$ and $\beta=5$ and $\gamma = 9$. The number of damaged vertices  equals $\lfloor \gamma/2 \rfloor = 4$ in the first case, $\lfloor \beta/2 \rfloor = 2$ in the second case, and $\lfloor (2\alpha + \beta)/2 \rfloor = 6$ in the last case.}
    \label{fig:path-lemma}
\end{figure}

\begin{observation}
\label{obs:path}
If a cop and a robber are placed at the ends of a path of length $\gamma$, see~\cref{fig:path-lemma} (top), and move towards each other each time step, then exactly $\lfloor \gamma/2 \rfloor$ vertices are damaged.
\end{observation}
\begin{proof}
By case distinction whether $\gamma$ is even or odd. Observe that if the robber enters a vertex and gets caught immediately after, the vertex stays undamaged.
\end{proof}

\begin{lemma}
\label{lemma:path-lemma}
We assume two cops and one robber play on a path, and their starting positions are such that the robber starts between the two cops and has distance $\alpha$ and $\beta$ to the two cops, respectively, see~\cref{fig:path-lemma} (middle). 

If the cops move towards the robber each time step, then at most $\max\set{\lfloor \alpha/2 \rfloor, \lfloor \beta/2 \rfloor}$ vertices are damaged. Furthermore, if $\alpha,\beta \neq 1$, then the robber can also achieve this bound.
\end{lemma}
\begin{proof}
Assume that from left to right we have the cop $C_1$, the robber $R$ and the cop $C_2$.
In every round, the robber can make a left step, a right step, or pass his move and thereby remain on his vertex. Clearly, passing offers no advantage since it results in a total damage that is lower. 
(Observe that even if the case appears that the robber is surrounded by two cops and both a left step and a right step would enter a vertex with a cop, passing the move still does not damage more vertices than moving.) 
Therefore we can assume that every move of the robber is a left or a right step. We say that the robber turns around if he switches direction from left to right, or from right to left in two successive rounds. 
We claim that in an optimal strategy, the robber turns around either one or zero times. Indeed, it is easy to check that if the robber would turn around two or more times, there is a strategy damaging the same number of vertices with less turns.

Now assume the robber makes zero turns and goes left in every round. By \cref{obs:path} exactly $\lfloor \alpha/2 \rfloor$ vertices get damaged. Likewise, if the robber goes right, he will damage $\lfloor \beta/2 \rfloor$ vertices.

Finally, assume the robber makes $c_1$ steps to the left, turns around, and makes $c_2$ steps to the right before getting caught. First, if $c_1 \geq c_2$, then this strategy is dominated by the strategy to always move left. 
Second, observe that if $c_1 < c_2$, then the robber damages exactly $c_2$ vertices. 
However, while $R$ moved $c_1$ vertices to the left, the cop $C_2$ also moved $c_1$ vertices to the left. Therefore, this strategy is analogous to the strategy with zero turns. Hence exactly  $\lfloor \beta/2 \rfloor$ vertices are damaged. Finally, if $R$ first goes to the right and then turns around to the left, by the same argument at most $\lfloor \alpha/2 \rfloor$ vertices get damaged.

To show that this bound can always be attained when $\alpha,\beta \neq 1$, observe that the robber can simply check which of the $\alpha$ and $\beta$ is larger, and move in the corresponding direction. However, if $\alpha=1$ or $\beta=1$, the closer cop immediately catches him (as the game always starts with a cop move). 
\end{proof}

\begin{lemma}
\label{lemma:path-lemma-2}
Assume one cop and two robbers play on a path of length $\alpha+\beta$, and their starting positions are such that the cop starts between the two robbers and has distance $\alpha$ and $\beta$ to the two robbers, respectively, with $\alpha \leq \beta$, see~\cref{fig:path-lemma} (bottom). Furthermore assume that the robbers move towards the cop without ever stopping. 
If the cop plays optimally, then exactly $\lfloor (2\alpha + \beta)/ 2 \rfloor$ vertices are damaged. 
\end{lemma}
\begin{proof}
We assume that from left to right we have the robber $R_1$, the cop $C$ and the robber $R_2$ in this order, and we assume that $C$ has distance $\alpha$ to $R_1$ and distance $\beta$ to $R_2$. As we assumed that the robbers move towards the cop each time step, it is clear that the cop should either move first towards $R_1$ to catch it and then catch $R_2$ afterwards, or the other way around. Let us first assume that the cop goes to catch $R_1$ before $R_2$. We distinguish the case whether $\alpha$ is even or odd:

\textbf{$\alpha$ is even:}
In this case, after $\alpha/2$ cop moves, when the robber does his $(\alpha/2)$-st move, the robber $R_1$ steps onto the cop and is caught. This results in $R_1$ damaging $\alpha/2$ vertices in total. We can now ask how many vertices did $R_2$ damage? Note that $C$ is $\alpha/2$ left steps away from its starting position, so after another $\alpha/2$ right steps, $C$ will be back at the starting position. We can apply \cref{obs:path} to a path of length $2\cdot \alpha/2 + \beta$, so we see that $R_2$ damaged $\lfloor (\alpha + \beta) / 2 \rfloor$ vertices. As $\alpha/2$ is an integer, together the robbers damaged exactly $\lfloor (2\alpha + \beta) / 2 \rfloor$ vertices.

\textbf{$\alpha$ is odd:} Consider the situation after $(\alpha-1)/2$ steps: In this situation, $R_1$ and $C$ are on adjacent vertices. Now there seem to be two reasonable strategies for the cop: Either the cop could immediately catch $R_1$ and prevent $R_1$ from damaging the vertex $R_1$ is currently on. This way the cop would prevent the robber from damaging a vertex, but would also move one unit away from the starting position. The second option would be to pass for a single round, and let the robber $R_1$ walk onto the cop. In this strategy, robber $R_1$ damages one more vertex, but $C$ is one unit closer towards $R_2$.

First we examine the strategy where $R_1$ is caught immediately: There $R_1$ damages $(\alpha-1)/2$ vertices. Now $C$ is $(\alpha + 1)/2$ left steps away from the starting position. By an analogous argument as in the case where $\alpha$ is even, the robber $R_2$ damages $\lfloor (\alpha + 1 + \beta)/2 \rfloor$ vertices before getting caught. Together, exactly $\lfloor (2\alpha + \beta) / 2 \rfloor$ vertices are damaged. 

Secondly, we examine the strategy where $C$ passes for a single round. There $R_1$ damages $(\alpha+1)/2$ vertices and $R_2$ damages $\lfloor (\alpha + \beta)/2 \rfloor$ vetrices (because \cref{obs:path} is applied to a path of length $2\cdot(\alpha-1)/2 + 1 + \beta$). In total, $\lfloor (2\alpha + \beta + 1) / 2 \rfloor$ vertices are damaged. Hence we see that the strategy of passing is inferior to the other strategy.

This concludes the case distinction. Finally, if $C$ goes to catch $R_2$ first, by symmetry we see that $\lfloor (2\beta + \alpha) / 2 \rfloor$ vertices are damaged. This is inferior to the strategy of catching $R_1$ first. Therefore we conclude that under optimal play by the cop, exactly $\lfloor (2\alpha + \beta) / 2 \rfloor$ vertices are damaged.

\end{proof}

\subsection{Two cops and two robbers on a cycle}
\label{subsection:cycle}

Here we investigate the damage number $\dmg(C_n;2,2)$ of the cycle $C_n$ with two cops and two robbers. Our goal is to eventually, in several steps, prove Theorem~\ref{thm:damage-cycle}.

At first glance, one might think that the best strategy for $t=2$ cops should be to initially position themselves in such a way that the cycle is divided into two pieces of roughly equal length, but interestingly that is not the case. Our analysis shows that the cops' initial positions should divide the cycle into two parts of length roughly $n/3$ and $2n/3$ each. The rest of the section is devoted to establishing the exact number $\dmg(C_n;2,2)$ for all $n \geq 4$.

\paragraph{Notation.} We will encounter the same setting multiple times, so we establish a standard notation. In this setting we have the cycle on $n$ vertices, and the cops $C_1$ and $C_2$ have already chosen their starting positions such that the cycle is divided into two paths $P_A$ and $P_B$ of length $a$ and $b$, such that $a + b = n$. We always assume $a \leq b$. W.l.o.g., the cycle is arranged so that we have in clockwise order the cop $C_1$, the path $P_A$, the cop $C_2$ and the path $P_B$, see \cref{fig:circle-notation}. Let $x_0,\dots,x_a$ be the $a+1$ vertices of $P_A$ and $y_0,\dots,y_b$ be the $b+1$ vertices of $P_B$ in clockwise order, i.e., cop $C_1$ starts at vertex $x_0=y_b$ and cop $C_2$ starts at vertex $y_0=x_a$. First the cops choose the starting position and hence the values of $a$ and $b$. After that, the robbers choose their starting position, followed by the cop's first move.
\begin{figure}[thpb]
    \centering
    \includegraphics[scale=1.1]{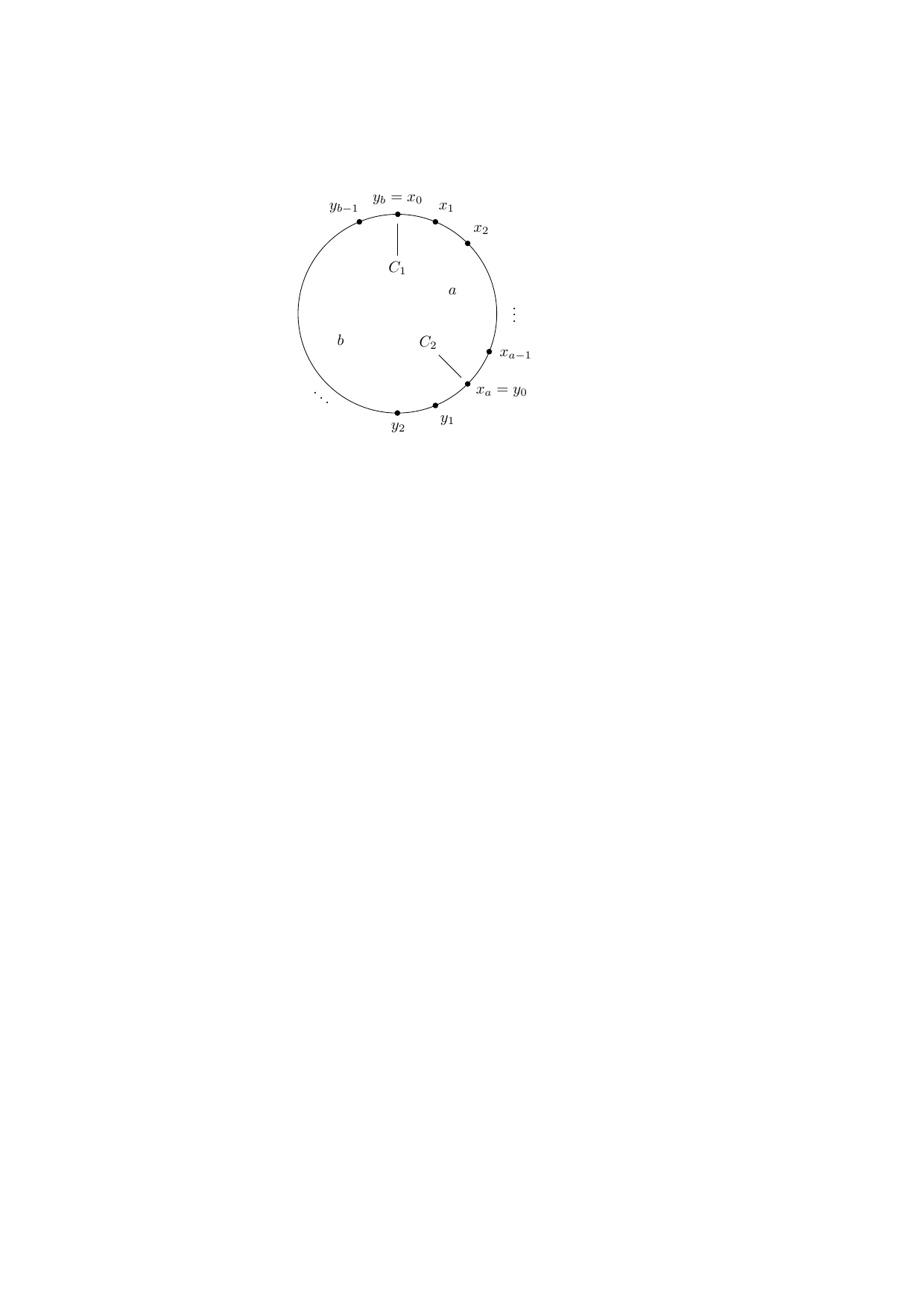}
    \caption{Notation used in \cref{subsection:cycle}}
    \label{fig:circle-notation}
\end{figure}

\begin{lemma}
\label{lemma:cops-protect-cycle}
 Consider the cycle $C_n$ for $n \geq 4$. If the cops choose initial positions such that $b \geq 6$, where $a,b$ are defined as above, see Figure~\ref{fig:circle-notation}, then the cops have a strategy to ensure that at most $\max\set{b-3,\lfloor (n + a)/2 \rfloor - 3}$ vertices get damaged.
\end{lemma}
\begin{proof}
The two cops subdivide the cycle of length $n = a+b$
 into two paths $P_A, P_B$ of length $a$ and $b$ each. Note that the direction of clockwise and counterclockwise is defined on the cycle, and by extension it is also defined on $P_A,P_B$.
 If both robbers choose to begin on the same path, then at most $b-3$ vertices are damaged (because the path $P_B$ has $b+1$ vertices, and the four distinct vertices $y_0,y_1,y_{b-1},y_b$ can never get damaged if the cops immediately move towards the robbers). Therefore we assume for the rest of the proof that one robber $R_A$ starts on the path $P_A$ and one robber $R_B$ starts on the path $P_B$, and we prove that in this case at most $\lfloor (n + a)/2 \rfloor - 3$ vertices get damaged.

 As soon as the game starts, the cops check whether a robber chose $x_1, x_{a-1}, y_1$, or $y_{b-1}$ as starting position. In this case they immediately catch this robber and then both cops approach and catch the remaining robber. It is easy to see that this way at most $\lfloor b/2 \rfloor$ vertices get damaged (one can use \cref{lemma:path-lemma} with $\alpha = 2, \beta = b$). Using the assumption $b \geq 6$ we have $\lfloor b/2 \rfloor \leq b - 3$. Therefore we can assume for the remaining proof that no robber starts at one of those four vertices. 
 
We distinguish two cases: when $a$ is odd and when $a$ is even.

\textbf{$a$ is odd:} The cops employ the following strategy. In Phase 1, cop $C_1$ goes $(a-1)/2$ steps in clockwise direction, while $C_2$ goes $(a-1)/2$ steps in counterclockwise direction. Observe that after this  maneuver, the cops are on adjacent vertices and so robber $R_A$ is caught. After this phase 2 starts, both cops turn around and cop $C_1$ goes in counterclockwise direction while cop $C_2$ goes in clockwise direction until the two cops meet again. We claim that the robbers damage at most $\lfloor (n + a)/2 \rfloor -3$ vertices.

Robber $R_A$ gets caught after $(a-1)/2$ cop moves, so he clearly damaged at most $(a-1)/2 - 1$ vertices. How many vertices can $R_B$ damage before being caught? Consider the following change of perspective: Instead of cops $C_1$ and $C_2$ first moving away from $R_B$ for $(a-1)/2$ steps each in Phase 1 and then moving towards $R_B$ in Phase 2, we imagine they are located on the endpoints of a path of length $b + 4(a-1)/2 = b +2a - 2$ and right away move towards $R_B$. This way, after the initial $(a-1)/2$ steps they will be at the same relative positions as they were right after Phase 1 (at distance $n-1$, with a robber between them), but during those initial $(a-1)/2$ steps the robber has more freedom than in the actual scenario. Hence, an upper bound for damage in the imagined setting is an upper bound for the actual setting as well.

Now we can apply \cref{lemma:path-lemma} in the imagined setting. Note that $R_B$ does not start on $y_1,y_{b-1}$ and so the maximal distance $R_B$ can have from the endpoint of the path is $(b - 2) + (a - 1) = n-3$. By the lemma, $R_B$ damages at most $\lfloor (n-3)/2 \rfloor$ vertices. Using the fact that $(a-1)/2$ is an integer, we have that the robbers combined cause at most $(a-1)/2 - 1 + \lfloor (n-3)/2 \rfloor = \lfloor (n + a)/2 \rfloor - 3$ damage.

\textbf{$a$ is even:} The cops employ the following strategy. First, $C_1$ moves $a/2 - 1$ steps clockwise and $C_2$ moves $a/2 - 1$ steps anticlockwise. After this, cop $C_1$ occupies vertex $x_{a/2-1}$, cop $C_2$ occupies vertex $x_{a/2+1}$ and it is possible that the robber $R_A$ could be waiting on $x_{a/2}$, not moving. Which of the cops should capture the robber? This is an important decision because the cop that does not capture the robber can immediately move towards the other robber $R_B$ and capture $R_B$ one round earlier, preventing one vertex from being damaged. 

To resolve this issue, the cops consider the vertex $v_B$ which the robber $R_B$ chose as its initial starting position. If $v_B = y_2$ or $v_B = y_3$, then $C_2$ goes one step counter-clockwise and captures $R_A$, and $C_1$ also goes one step counter-clockwise already heading towards the other robber. In all other cases $C_1$ goes one step clockwise and captures $R_A$, and $C_2$ also goes one step clockwise. After that, the two cops proceed like in Case 1, i.e., cop $C_1$ goes counter-clockwise and $C_2$ goes clockwise until the remaining robber is caught.

Because robber $R_A$ is caught after $a/2$ cop moves, it causes at most $a/2 - 1$ damage. How much damage can $R_B$ cause? We again change our perspective and imagine a path consisting of the concatenation of the three paths $Q_1$ and $P_B$ and $Q_2$, where in the case $v_B \in \set{y_2, y_3}$ the path $Q_1$ has length $2a-2$ and path $Q_2$ has length $2a$, and in all other cases path $Q_1$ has length $2a$ and path $Q_2$ has length $2a-2$. The path $P_B$ is as previously defined.

We can imagine that cop $C_1$ is at the far end of $Q_1$ and the cop $C_2$ is at the far end of $Q_2$ and they are both moving towards the robber $R_B$. As in the previous case, it is straightforward to conclude that an upper bound obtained in this imagined situation will be an upper bound for the actual situation. We now apply \cref{lemma:path-lemma} to give a bound on the number of vertices damaged by $R_B$.
For every $i \in \set{2,\dots,b-2}$, we consider the case where $v_B = y_i$ and we let $d_1(i)$ in this case denote the distance of the robber's starting position $v_B$ to the cop $C_1$ on the new path $(Q_1,P_B,Q_2)$. Let $d_2(i)$ be defined analogously for cop $C_2$. We then have
\begin{align*}
    d_1(i) &= \begin{cases}
                    b - i + a - 2, & \text{if } i \in\set{2,3}\\
                    b - i + a, & \text{otherwise,}
                \end{cases} \\
    d_2(i) &= \begin{cases}
                    i + a, & \text{if } i \in\set{2,3}\\
                    i + a - 2, & \text{otherwise.}
                \end{cases}
\end{align*}
Furthermore, by \cref{lemma:path-lemma}, the robber $R_B$ damages at most $\max\{\lfloor d_1(i)/2 \rfloor, \lfloor d_2(i)/2 \rfloor\}$ vertices. 
We claim that both $\lfloor d_1(i)/2 \rfloor, \lfloor d_2(i)/2 \rfloor \leq \lfloor n/2 \rfloor - 2$, for all values of $i$.  
Indeed, observe that in the cases $i=4,\dots,b-2$, we have
\begin{align*}
    &d_1(b-2) \leq  \dots \leq d_1(4) = b-4+a = n-4,\\[2mm]
    &d_2(4) \leq \dots \leq d_2(b-2) = a + b -4 = n - 4.
\end{align*}
Note that $d_2(b-2) = a + b - 4$, because $b-2 \geq 4$, by assumption on $b$. Furthermore in the cases $i = 2,3$, we have
\begin{align*}
    d_1(3) &\leq d_1(2) = b-2+a-2 = n-4,\\[2mm]
    \left\lfloor \frac{d_2(2)}{2} \right\rfloor &\leq  \left\lfloor \frac{d_2(3)}{2} \right\rfloor = \left\lfloor \frac{a + 3}{2} \right\rfloor \leq \left\lfloor \frac{n - 4}{2} \right\rfloor.
\end{align*}
Note that the last inequality is true, because in the case $b \geq 7$ we have $a + 3 \leq n - 4$, while in the case $b = 6$, we have $a + 3 \leq n - 3$, but also $n$ is even and so $\lfloor (n - 3)/2 \rfloor = \lfloor (n - 4)/2 \rfloor$. In total, robber $R_B$ causes at most $\lfloor(n-4)/2\rfloor$ damage, and both robbers combined cause at most $a/2 - 1 + \lfloor(n-4)/2\rfloor = \lfloor (n+ a)/2 \rfloor - 3$ damage. This concludes the proof.
\end{proof}

We note that if the assumption $b \geq 6$ is dropped from the lemma statement, then it is not true anymore. Indeed, in the case $b=5$ and $a = 4$, a clever robber $R_B$ can start on vertex $y_2$ and observe which cop $C_1$ or $C_2$ catches  $R_A$ to adjust its direction accordingly. This way, the two robbers damage $\lfloor (n + a)/2 \rfloor - 2$ vertices.

\begin{lemma}
\label{lemma:cycle-dmg-upper-bound}
 For all $n \geq 9$, the cops have a strategy ensuring that at most $\lfloor 2n/3 + 1/2 \rfloor - 3$ vertices get damaged. In other words, $\dmg(C_n;2,2) \leq \lfloor 2n/3 + 1/2 \rfloor - 3$ for $n \geq 9$.
\end{lemma}
\begin{proof}
The idea is that the cops position themselves to divide the cycle into parts of size roughly 1/3 and 2/3. The exact size depends on whether $n = 0,1,2 \text{ (mod 3)}$.

\textbf{Case 1: $n = 3k$ for some $k \in \N, k \geq 3$}. \\
We have $\lfloor 2n/3 + 1/2 \rfloor - 3 = 2k - 3$. The cops choose initial positions such that $a = k$ and $b = 2k$, then $b \geq 6$, and \cref{lemma:cops-protect-cycle} is applicable, and hence  the number of damaged vertices is at most $\max\set{b-3, \lfloor (n + a)/2 \rfloor - 3} = \max\set{2k-3,2k-3} = 2k-3$.

\textbf{Case 2: $n = 3k + 1$ for some $k \in \N, k \geq 3$}. \\
We have $\lfloor 2n/3 + 1/2 \rfloor - 3 = 2k - 2$. The cops choose initial positions such that $a = k+1$ and $b = 2k$, then $b \geq 6$, and \cref{lemma:cops-protect-cycle} is applicable, and hence the number of damaged vertices is at most $\max\set{b-3, \lfloor (n + a)/2 \rfloor - 3} = \max\set{2k-3, \lfloor (4k+2)/2 \rfloor - 3} = \max\set{2k-3,2k-2} = 2k-2$.

\textbf{Case 3: $n = 3k + 2$ for some $k \in \N, k \geq 3$}. \\
We have $\lfloor 2n/3 + 1/2 \rfloor - 3 = 2k - 2$. The cops choose initial positions such that $a = k+1$ and $b = 2k+1$, then $b \geq 6$, and \cref{lemma:cops-protect-cycle} is applicable, and hence the number of damaged vertices is at most $\max\set{b-3, \lfloor (n + a)/2 \rfloor - 3} = \max\set{2k-2, \lfloor (4k+3)/2 \rfloor - 3} = \max\set{2k-2,2k-2} = 2k-2$.
\end{proof}

\begin{lemma}
\label{lemma:cycle-dmg-lower-bound}
 For all $n \geq 6$, the robbers have a strategy ensuring that at least $\lfloor 2n/3 + 1/2 \rfloor - 3$ vertices get damaged. In other words, $\dmg(C_n;2,2) \geq \lfloor 2n/3 + 1/2 \rfloor - 3$ for $n \geq 6$.
\end{lemma}
\begin{proof}
The robbers have two principal strategies. By Strategy~I, they start at vertices $y_2$ and $y_{b-2}$ and walk towards each other to damage most of the path $P_B$. Under the assumption $b \geq 3$, this way $b - 3$ vertices are damaged. The second one, denoted by Strategy~II, is for robber $R_A$ to start at $x_2$ and walk clockwise without ever stopping while robber $R_B$ starts at $y_{b-2}$ and walks counterclockwise without ever stopping. As the robbers do not ever stop and both run away from cop $C_1$, it is clear that $C_1$ can never catch any robber and cop $C_2$ has to catch both robbers (as it is safe to assume that the two cops do not cross over each other). 

Therefore, under the assumption $a,b \geq 2$, we have that \cref{lemma:path-lemma-2} is applicable with $\alpha = a-2$ and $\beta = b-2$, and hence the number of damaged vertices is $\lfloor (2a + b - 6)/2 \rfloor = \lfloor (n + a)/2 \rfloor - 3$. We now make a case distinction whether $n = 0,1,2 \text{ (mod 3)}$.

\textbf{Case 1: $n = 3k$ for some $k \in \N, k \geq 3$}. \\
We have $\lfloor 2n/3 + 1/2 \rfloor - 3 = 2k - 3$. The robbers observe the cops. If $b \geq  2k$, they use Strategy~I. This damages at least $2k - 3$ vertices. Otherwise, we have $b \leq 2k - 1, a \geq k + 1$ and the robbers use Strategy~II. This damages at least $\lfloor (n + a)/2 \rfloor - 3 = \lfloor (3k + k + 1)/2 \rfloor - 3 = 2k - 3$ vertices.

\textbf{Case 2: $n = 3k + 1$ for some $k \in \N, k \geq 3$}. \\ 
We have $\lfloor 2n/3 + 1/2 \rfloor - 3 = 2k - 2$. The robbers observe the cops. If $b \geq  2k + 1$, they use Strategy~I. This damages at least $2k - 2$ vertices. Otherwise, we have $b \leq 2k, a \geq k + 1$ and the robbers use Strategy~II. This damages at least $\lfloor (3k + 1 + k + 1)/2 \rfloor - 3 = 2k - 2$ vertices.

\textbf{Case 3: $n = 3k + 2$ for some $k \in \N, k \geq 3$}. \\
We have $\lfloor 2n/3 + 1/2 \rfloor - 3 = 2k - 2$. The robbers observe the cops. If $b \geq  2k + 1$, they use Strategy~I. This damages at least $2k - 2$ vertices. Otherwise, we have $b \leq 2k, a \geq k + 2$ and the robbers use Strategy~II. This damages at least $\lfloor (3k + 2 + k + 2)/2 \rfloor - 3 = 2k - 1$ vertices.
\end{proof} 

\begin{lemma} \label{lemma:cycle-finite-cases}
 For $n \in \set{4,5,6,7,8}$, we have $\dmg(C_7;2,2) = \lfloor 2n/3 + 1/2 \rfloor - 3$. 
\end{lemma}
\begin{proof}
Using a few case distinctions, it is easy to verify $\dmg(C_4;2,2) = 0$, $\dmg(C_5;2,2) = 0$, and  $\dmg(C_6;2,2) = 1$. Consider $n = 7$. By \cref{lemma:cycle-dmg-lower-bound}, the robbers have a strategy to damage two vertices. On the other hand, the cops have the following strategy to prevent more than two damaged vertices: They choose $a = 3, b = 4$. If both robbers start on the same path, they cause at most $b - 3 = 1$ damage. If one robber starts on path $P_A$ and the other on $P_B$, note that because $a = 3$, one of the robbers is on a vertex directly next to a cop. This cop can immediately catch the robber. The other robber causes at most $\lfloor b/2 \rfloor = 2$ damage, because of \cref{lemma:path-lemma-2} applied with $\alpha = 2, \beta = b$. This shows $\dmg(C_7;2,2) = 2$.

Finally consider the case $n = 8$. By \cref{lemma:cycle-dmg-lower-bound}, the robbers have a strategy to damage two vertices. On the other hand, the cops can choose $a = 3, b = 5$. Again we have $a = 3$, so by an analogous argument, the damage the robbers cause is either at most $b - 3 = 2$, or at most $\lfloor b/2 \rfloor = 2$. This shows $\dmg(C_8;2,2) = 2$.
\end{proof}

Theorem~\ref{thm:damage-cycle} follows directly from Lemma~\ref{lemma:cycle-dmg-upper-bound}, Lemma~\ref{lemma:cycle-dmg-lower-bound} and Lemma~\ref{lemma:cycle-finite-cases}.

\subsection{Two cops and two robbers on a path}

We now consider the damage number $\dmg(P_{n+1}; 2,2)$ of a path on $n$ edges with two cops and two robbers.
We believe that with a more rigorous analysis the exact value of this number could be determined, with meticulous calculations similar to the ones performed in the previous subsection. In order not to overload the paper with technicalities we choose to prove only bounds that are an additive constant away.

We prove \cref{thm:damage-path} in two steps, first providing a strategy of the cops to protect roughly $n/3$ vertices, 
and then providing a matching strategy of the robbers, damaging roughly $2n/3$ vertices. This suffices to show the theorem.

\begin{lemma}
    For all $n \geq 1$, we have $\dmg(P_{n+1};2,2) \leq 2n/3$.
\end{lemma}
\begin{proof}
    The cops choose their starting positions such that the path is divided into roughly three equal parts, 
    and after that they never move again. 
    Since the robbers can not pass a cop, no matter where they start, at most $2n/3$ of the vertices are accessible for them to damage.
\end{proof}

\begin{lemma}
\label{lemma:path-strategies}
    We have $\dmg(P_{n+1};2,2) \geq 2n/3 - \Theta(1)$, when $n$ tends to infinity.
\end{lemma}
\begin{proof}
    The robbers observe the initial placement $C_1, C_2$ of the cops, which divide the path into three segments, $P_A$, $P_B$, $P_C$, of length $a$, $b$, $c$, respectively, 
    such that from left to right we have $P_A, C_1, P_B, C_2, P_C$ in that order, and $a+b+c = n$.
    By symmetry, we can assume that $a \leq c$, so we will make that assumption for the rest of the proof.
    We list a set of potential strategies for the robbers, depicted in \cref{fig:path-strategies}.
\begin{figure}[thpb]
    \centering
    \includegraphics[scale=1.1, page=2]{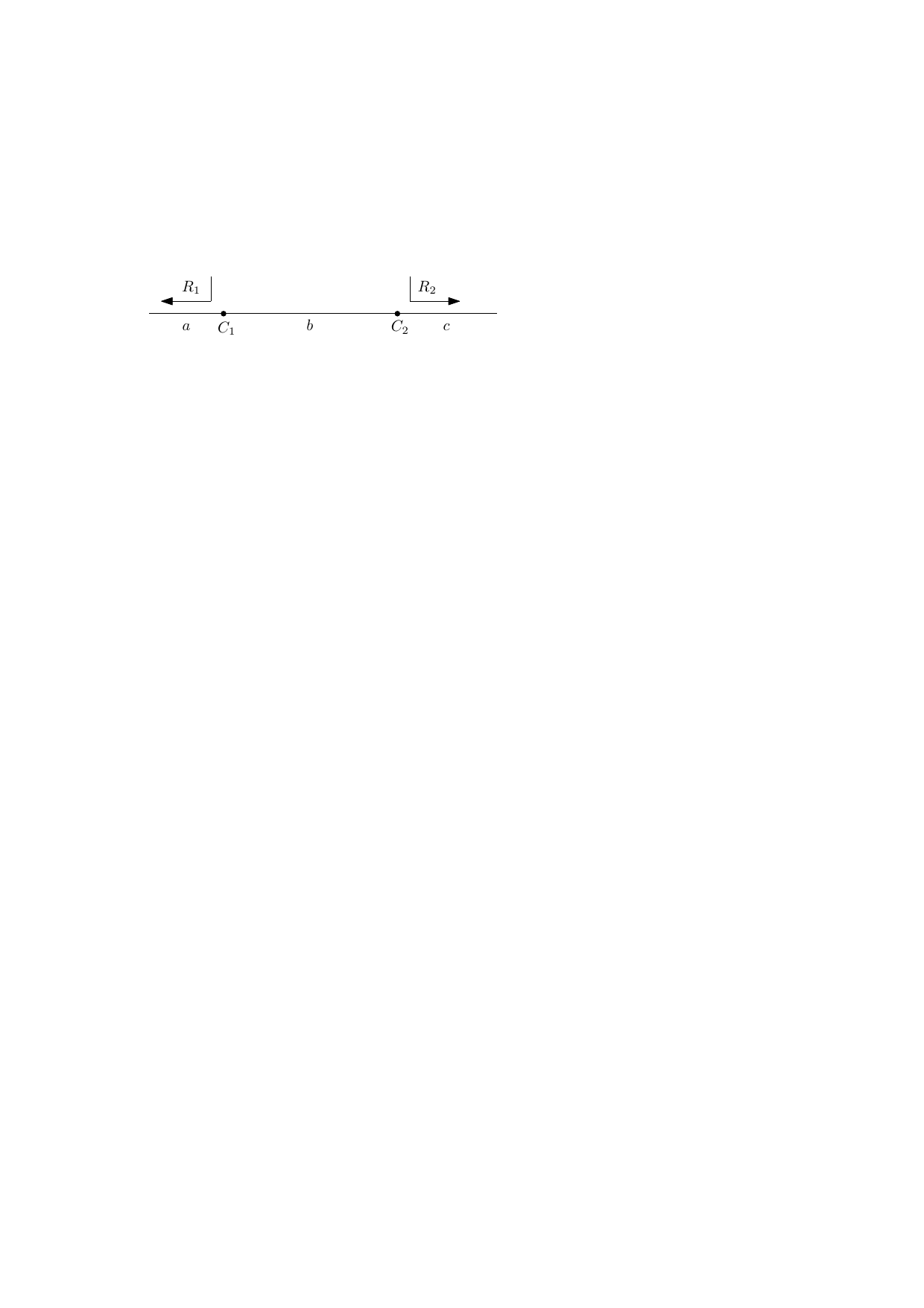}
    
    \vspace*{0.1cm}
    
    \includegraphics[scale=1.1, page=1]{path-strategies.pdf}
    
    \vspace*{0.1cm}
    
    \includegraphics[scale=1.1, page=3]{path-strategies.pdf}
    
    \caption{Strategies I, II and III (from top to bottom), used by the robbers in \cref{lemma:path-strategies}.}
    \label{fig:path-strategies}
\end{figure}

    In Strategy I, robber $R_1$  initially lands directly right of $C_1$ (at a distance of 2) and moves right afterwards, 
    and robber $R_2$ positions himself directly left of $C_2$ (at a distance of 2) and moves left afterwards. 
    Strategy I ensures the robbers at least $b - \Theta(1)$ damage.
    
    In Strategy II, robber $R_1$ initially lands directly left of $C_1$ (at a distance of 2) and moves left afterwards, and robber $R_2$ positions himself directly right of $C_2$ (at a distance of 2) and moves right afterwards. 
    Strategy II ensures the robbers at least $a + c - \Theta(1)$ damage.

    In the case of either $a + c \geq 2n/3$ or $b \geq 2n/3$, the lemma is thus proven. 
    Hence we can assume for the rest of the lemma that both $a + c  < 2n/3$ and $b < 2n/3$.
    In this case, the robbers employ Strategy III, which is defined as follows.

    Robber $R_1$ positions himself directly right of $C_1$, at a distance of two, and moves right afterwards.
    However, Robber $R_2$ has a more complex task. Initially he lands directly right of $C_2$ at a distance 2, and tries to follow the movements of $C_2$, ``shadowing'' him.
    That is, if $C_2$ takes a step to the right (left, respectively), then $R_2$ also takes a step to the right (left, respectively), thus maintaining the distance 2.
    He continues this behavior until he is either forced into the right end of the path and caught by $C_2$, 
    or until $R_1$ and $C_2$ meet. 
    If $R_1$ and $C_2$ meet, then $R_2$ changes his behavior and starts moving right for as long as possible.
    
    We claim that under our assumptions Strategy III surely damages roughly $2n/3$ vertices. Indeed, consider first the case that the cop $C_2$ has as first priority to catch $R_1$. 
    Knowing that $R_2$ is shadowing him, and because $R_2$ is not caught before $R_1$, we have that the robbers damage $b + c - \Theta(1) \geq 2n/3 -\Theta(1)$ vertices. 
    Here the previous calculation follows due to $a \leq c$ and $a + c < 2n/3$, hence $a < n/3$ and $b+c > 2n/3$.
    
    In the final case we consider what happens if $C_2$ goes to catch $R_2$ first. Note that it takes $C_2$ a total number of $2c$ steps to catch $R_2$ and return to his starting position afterwards.
    Hence if $2c \geq b$, again the robbers damage $b + c - \Theta(1)$ vertices, so we are done.
    
    Lastly, if $2c < b$, then robber $R_1$ only damages $b'$ vertices for some $0 \leq b' < b$. (And robber $R_2$ still damages $c$ vertices.) 
    This $b'$ satisfies $b' = 2c + (b - b')$, since both $R_1$ and $C_2$ take the same number of steps until they meet, implying $b' = c + b/2$, so the total damage is $2c + b/2$.
    Now, note that since $a \leq c$ we have $b = n - a - c \geq n - 2c$. Hence we have
    $n - 2c \leq b < 2n/3$, which implies $c > n/6$. We come to the conclusion that up to $\Theta(1)$ vertices, the total damage is at least 
    \[2c + b/2 \geq 2c + \frac{n - 2c}{2}  = c + \frac{n}{2}> \frac{2n}{3}.\]
    
    We conclude that in all cases, one of the three described strategies ensures that the robbers can damage at least $2n/3 -\Theta(1)$ vertices.
\end{proof}

\section{Conclusion}

We studied a variant of Cops and Robbers on graphs in which the robbers damage the visited vertices. Our initial interest was with the  game with one cop against $s$ robbers, and particularly the conjecture from~\cite{carlson2022multi} stating that the cop can save three vertices from damage as soon as the maximum degree of the base graph is at least $\binom{s}{2} + 2$. We verified the conjecture and proved that it is tight with an additional assumption that the base graph is triangle free. However, without that assumption, while we were able to disprove the conjecture and provide bounds an additive constant away, the exact number remains out of our reach, leaving an enticing open problem. 

Furthermore, we initiated the study of the same game with two cops and two robbers. We determined exactly the damage number $dmg(C_n;2,2)$ when the game played on a cycle, and gave bounds for $\dmg(P_{n+1};2,2)$ an additive constant away for the game on a path. It would be interesting to determine the damage number for any other families of base graphs.

We note that, all in all, the game has been studied with one cop against a number of robbers, one robber against a number cops, and in the two-against-two setting. To our knowledge there are no results covering any other pairs of cardinalities of the cops and robbers teams.

\section*{Acknowledgments}
This work was initiated at the 19th Gremo’s Workshop on Open Problems (GWOP), which took place in Binn, Switzerland in June 2022.
We thank the organizers and all the participants of the workshop for the inspiring atmosphere, and we are particularly indebted to Nicolas Grelier and Saeed Ilchi for the fruitful discussions in the early stages of this project.

We would like to thank the anonymous referees whose comments and suggestions improved our paper.

\bibliographystyle{acm}
\bibliography{ref-damage}

\end{document}